\documentclass[12pt]{article}

\usepackage{latexsym,amssymb}
\usepackage[T1]{fontenc}
\usepackage[dvips]{graphicx}
\pagestyle{headings}

\newcommand{\z}{\mathbb Z}
\newcommand{\q}{\mathbb Q}
\newcommand{\n}{\mathbb N}
\newtheorem{lem}{Lemma}[section]

\newtheorem{ex}[lem]{Example}
\newtheorem{co}[lem]{Corollary}
\newtheorem{thm}[lem]{Theorem}

\newenvironment{proof}{\textbf{Proof.}}{\newline\hspace*{\fill}{$\Box$}}

\begin{document}
\title{Largeness of LERF and 1-relator groups}
\author{J.\,O.\,Button\\
Selwyn College\\
University of Cambridge\\
Cambridge CB3 9DQ\\
U.K.\\
\texttt{jb128@dpmms.cam.ac.uk}}
\date{}
\maketitle
%\newpage
\begin{abstract}
We consider largeness of groups given by a presentation of deficiency 1,
where the group is respectively free-by-cyclic, LERF or 1-relator. We
give the first examples of (finitely generated free)-by-$\z$ word
hyperbolic groups which are large, show that a LERF deficiency 1 group
with first Betti number at least 2 is large or $\z\times\z$ and show that
2-generator 1-relator groups where the relator has height 1 obey the
dichotomy that either the group is large or all its finite images are
metacyclic.
\end{abstract}
%\newpage
\section{Introduction}

A finitely generated group $G$ is said to be large 
if it has a finite index
subgroup possessing a homomorphism onto a non-abelian free group. 
This has a range of implications: for instance
$G$ is SQ-universal (which means that every
countable group is a subgroup of a quotient of $G$), 
$G$ has uniformly exponential word growth, $G$ has
the largest possible subgroup growth for finitely generated groups
(which is of strict type $n^n$)  
and $G$ has infinite virtual first Betti number. If we restrict
ourselves to finitely presented groups and define the deficiency of a
finite presentation to be the number of generators minus the number of
relators, a well known result of
B.\,Baumslag and S.\,J.\,Pride in \cite{bp} is that groups with a 
presentation of deficiency at least 2 are large.

The results in this paper follow on from \cite{me} where the question
considered was which groups with a deficiency 1 presentation are large.
Clearly $\z$ and $\z\times\z$ are not, and neither are the soluble
Baumslag-Solitar groups $BS(1,n)$, where the Baumslag-Solitar group
$BS(m,n)$ has the presentation $\langle x,y|yx^my^{-1}=x^n\rangle$.
There are other cases, such as $BS(2,3)$, but no more residually finite
examples are known (indeed all others are ``far from being residually
finite'' in a sense that will be made precise in Section 4).

However in \cite{me} many families of deficiency 1 groups which are all
large were found. In particular if $F_n$ is the free group of rank $n$
then it was shown that free-by-cyclic groups $F_n\rtimes_\alpha\z$ are
large for $n\geq 2$ if they contain $\z\times\z$, which is equivalent
here to not being a word hyperbolic group. The question of whether
$F_n\rtimes_\alpha\z$ is large in the word hyperbolic case was left
open, and up until now not a single large example was known. In Section 2
we show that if $\alpha$ is a reducible automorphism then $F_n\rtimes
_\alpha\z$ is large provided that free-by-cyclic groups have finite index
subgroups with first Betti number at least 2. This was raised by A.\,Casson
in \cite{bspl} Question 12.16. Although still unknown in the word hyperbolic
case, if $\alpha$ is a specific reducible automorphism then we merely require
for largeness that two particular free-by-cyclic groups have virtual first
Betti number at least 2: these are the one obtained by restricting (a
suitable power of) $\alpha$ to the invariant free factor, and the 
free-by-cyclic group formed by quotienting out the invariant free factor.
By taking a specific word hyperbolic group in the literature which is
of the form $F_3\rtimes_\alpha\z$ and using it to make a reducible
automorphism, we obtain Corollary 2.4 which gives the first
group of the form $F_n\rtimes_\alpha\z$ which is known to be
both word hyperbolic and
large. It double covers a group with the same properties which has a
most succinct presentation: $\langle t,a|t^6at^{-4}
a^{-1}t^{-2}a^{-1}\rangle$.

We have mentioned that the property of residual finiteness should increase
the chances of a deficiency 1 presentation being large. For instance
all groups of the form $F_n\rtimes_\alpha\z$ are residually finite. In
Section 3 we look at deficiency 1 groups which are LERF (also known as
subgroup separable). This is considerably stronger than residual
finiteness so we would expect these groups to be large (with three
obvious exceptions). Once again though the problem is finding a finite
cover with first Betti number at least 2. We prove in Theorem 3.3 that
if $G$ is a LERF group with a presentation of deficiency 1 and has such
a finite cover then $G$ is large or the fundamental group of the torus
or Klein bottle. A recent result in \cite{ko}
of D.\,Kochloukova is used in
Theorem 3.1 to show that the only possible exceptions to LERF deficiency 1
groups being large (apart from $\z$ and these two groups) are word hyperbolic
groups of the form $F_n\rtimes_\alpha\z$. This at least gives us in
Corollary 3.2 that all but these three deficiency 1 LERF groups are
SQ-universal, but we are prevented from concluding largeness until
Casson's question is settled (at least in the LERF case but even here
this seems open). We also look at the question of a type of
Tits alternative for deficiency 1 groups which would say that such a group
is either soluble or contains a non-abelian free group. The result in
\cite{ko} 
mentioned above very nearly established this but one case is still
to be resolved. We show in Corollary 3.4 that this is true if $G$ has a
finite index subgroup with first Betti number at least 2. However it is
not true that all finitely generated subgroups of $G$ will either be virtually
soluble or contain a non-abelian free group, and Example 3.5 is such a
group which has deficiency exactly 1.

Another much studied class of groups are those with a 1-relator
presentation. The intersection of 1-relator and deficiency 1 groups is
the class of 2-generator 1-relator presentations. Although groups of this
form have strong properties, it is not known which ones are large.
Moreover we would like to be able to deduce largeness using only
information obtained directly from the presentation rather than
needing to know a priori that the group has special properties such as
residual finiteness.

In Section 4 we settle this question for a particular class of 2-generator
1-relator presentations. Given any relator in 2 variables we can make a
change of basis of $F_2$ to $\{a,t\}$ such that the exponent sum of $t$
in the relator is zero. We say that $r$ has height 1 if appearances of
$t^{\pm 1}$ in $r$ are such that $t$ alternates with $t^{-1}$. Although
this is a restricted set of relators, it is the case that nearly all of
the 2-generator 1-relator groups in the literature which have unusual
or nasty properties (we review these in that section) are given by
height 1 words.

We establish a major dichotomy of groups $G$ with height 1 presentations in
Theorem 4.1 which states that either $G$ is large or all its finite images
are metabelian. If $G$ is not a soluble Baumslag-Solitar group but is in
the latter case then $G$ contains a non-abelian free group, so is far
from being metabelian and hence far from being residually finite. We also
apply a famous result of Zelmanov on pro-$p$ groups which allows us in
Corollary 4.2 to distinguish between the two cases: $G$ is large if and
only if it has a finite index subgroup whose abelianisation requires
at least 3 generators, which is a condition that can easily be checked
on a computer.

It is not true that a large height 1 group is necessary residually finite.
However in Corollary 4.6 we give the first example of a 2-generator
1-relator presentation of a group $G$ where $\beta_1(G)=2$ and the only
finite images of $G$ are abelian but $G$ is not equal to $\z\times\z$. 
This means that it is not enough to conclude largeness (or equality
with $\z\times\z$) for a deficiency 1 group with first Betti number at
least 2 (even in the 1-relator case). However this is unknown if the first
Betti number is at least 3, or even if the abelianisation of the group
requires at least 3 generators.

Section 5 is a collection of open questions encountered during the
preparation of this work which, although some of these might be well
known, we have not found in the standard problem lists. They are all on
finitely generated and finitely presented groups and the list begins with
the most wide ranging questions and then gradually specialises, ending
with unsolved problems that are the most relevant to this paper.

The author would like to thank J.\,Hillman for bringing his attention
to the paper \cite{ko}.

\section{Reducible Free-by-Cyclic Groups}

A powerful method for proving largeness directly from a given finite
presentation is to apply Howie's result which is Theorem A
in \cite{how}. This tells us
that if there is a homomorphism $\chi$ from a finitely presented group
$G$ onto $\z$ such that the Alexander polynomial 
$\Delta_{G,\chi}(t)\in\z[t^{\pm 1}]$ relative to $\chi$
is identically zero then $G$ is
large (indeed the proof shows that a finite index subgroup containing
ker $\chi$ surjects onto a non-abelian free group). However not only do
we need the abelianisation $\overline{G}=G/G'$ of $G$ to be infinite
in order to have a homomorphism onto $\z$ in the first place, we also have
that if $\overline{G}=\z\times T$ for $T$ finite then $|\Delta_{G,\chi}
(1)|$ is the order of $T$. Thus we will not be able to use this criterion
for largeness unless the first Betti number $\beta_1(G)$ is at least two,
or we can find a finite index subgroup with this property. (In fact
Howie's theorem is also true if the mod $p$ Alexander polynomial
$\Delta_{G,\chi}^p\in\mathbb F[t^{\pm 1}]$ for $\mathbb F=\z/p\z$ is
zero and we will use this in later sections, but for now we will stick to
the characteristic zero version.)

Given a finitely presented group $G=\langle x_1,\ldots ,x_m|r_1,\ldots,r_n
\rangle$ which does have a homomorphism $\chi$ onto $\z$ where
$K=\mbox{ker }\chi$, we can regard $K/K'$ as a $\z[t^{\pm 1}]$-module
where $t$ acts by conjugation on $K$ using an element of $\chi^{-1}(1)$.
Moreover we can obtain a finite presentation for this module $K/K'$ by
using the Reidemeister-Schreier rewriting process to go from a group
presentation for $G$ to a group presentation for $K$, and then abelianising
the relations. Although this will result in infinitely many group
relations there are only finitely many orbits under the action of $t$.
The result is an $(m-1)$ by $n$  presentation matrix $M$ for $K/K'$ and we
can assume that $n\geq m-1$ by adding zero columns if necessary. We then
define the Alexander polynomial $\Delta_{G,\chi}\in\z[t^{\pm 1}]$ to be the 
highest common factor of the $(m-1)$ by $(m-1)$ minors of $M$; it is the
same (up to units in $\z[t^{\pm 1}]$) for any finite presentation of $G$.

The fact that a zero Alexander polynomial implies largeness is particularly
useful for groups $G$ with a presentation of deficiency 1 because the
resulting matrix is square and so we are merely evaluating the determinant
to obtain the Alexander polynomial. In particular if we find that one row
or column consists entirely of zeros then we immediately conclude
largeness. We do have the problem mentioned above that we need $\beta_1(G)
\geq 2$ for this to happen, however another advantage of deficiency 1
presentations is that for any finite index subgroup $H$ of $G$ (for which
we write $H\leq_f G$) the Reidemeister-Schreier rewriting process
results in a deficiency 1 presentation for $H$. As $H$ is large if and
only if $G$ is, we can hope that there is a subgroup $H$ with $\beta_1(H)
\geq 2$.

A large class of deficiency 1 presentations come from the free-by-cyclic 
groups: let $F$ be a free group and $\alpha$ an automorphism of
$F$. Then we can form the semidirect product (also called the mapping
torus) $F\rtimes_\alpha\z$. If $F_n$ is the free group of rank $n$ with free
basis $x_1,\ldots ,x_n$ then $F_n\rtimes_\alpha\z$ has the presentation        
\begin{equation}
\langle x_1,\ldots ,x_n,t|tx_1t^{-1}=\alpha(x_1),\ldots ,
tx_nt^{-1}=\alpha(x_n)\rangle.
\end{equation}
The following facts are known about free-by-cyclic groups 
$F\rtimes_\alpha\z$; see \cite{me}
Section 5 and references within.\\
(1) If $F$ is of infinite rank then $F\rtimes_\alpha\z$ may be finitely
or infinitely generated.
If $F\rtimes_\alpha\z$ is infinitely generated then it need not be
large, nor residually finite, but if it is finitely generated then it is
residually finite and finitely presented. Moreover it has a presentation
with deficiency at least 2, so is large.\\
(2) $F_n\rtimes_\alpha\z$ is residually finite, has deficiency exactly equal to
1 and any finite index subgroup is also of the form $F_m\rtimes_\beta\z$
so also has deficiency exactly 1.\\
(3) $F_n\rtimes_\alpha\z$ is word hyperbolic precisely when it does not
contain a subgroup isomorphic to $\z\times\z$. If it does and $n\geq 2$
then it is large. However it is not known whether word hyperbolic groups
of the form $F_n\rtimes_\alpha\z$ are large. 
A problem in \cite{bspl} due to Casson
is whether a group $G=F_n\rtimes_\alpha\z$ with $n\geq 2$ always has
$H\leq_f G$ with $\beta_1(H)\geq 2$. Whilst this is true if $G$ contains
$\z\times\z$, it is unknown in general if $G$ is word hyperbolic and
so we will need to take this as an assumption.

An automorphism $\alpha$ of the free group $F_n$ for $n\geq 2$ is said
to be reducible if there exist proper non-trivial free factors $R_1,
\ldots ,R_k$ of $F_n$ such that the conjugacy classes of $R_1,\ldots
,R_k$ are permuted transitively by $\alpha$ (see \cite{bh}).

\begin{thm} Assume that any group of the form $F_n\rtimes_\alpha\z$ for
$n\geq 2$ has a finite index subgroup with first Betti number at least
2. Then if $G=F_n\rtimes_\alpha\z$ for $\alpha$ a reducible automorphism,
we have that $G$ is large.
\end{thm}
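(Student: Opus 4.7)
The plan is to use the reducibility to reduce to a single $\alpha$-invariant free factor, then construct a finite-index subgroup of $G$ on which Howie's Alexander polynomial criterion applies. First, since $\alpha$ permutes the conjugacy classes $[R_1],\ldots,[R_k]$ transitively, the $k$-th power $\alpha^k$ fixes each, giving $\alpha^k(R_1)=gR_1g^{-1}$ for some $g\in F_n$. Working in the finite-index subgroup $F_n\rtimes_{\alpha^k}\z\leq_f G$ and reparametrising the generator of $\z$ from $t^k$ to $g^{-1}t^k$ conjugates $\alpha^k$ by an inner automorphism, producing an automorphism preserving $R_1$ setwise. After relabelling, we may assume $\alpha$ itself preserves a proper non-trivial free factor $R$ of $F_n=R\ast S$, giving induced automorphisms $\alpha|_R$ of $R$ and $\bar\alpha$ of $S=F_n/\langle\langle R\rangle\rangle$, a subgroup $G_R=R\rtimes_{\alpha|_R}\z$ of $G$, and a quotient $G_S=S\rtimes_{\bar\alpha}\z$ of $G$.

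Applying the hypothesis (or checking directly when $R$ or $S$ has rank one) to $G_R$ and $G_S$ produces an integer $m\geq 1$ and normal $\alpha^m$-invariant finite-index subgroups $R_0\leq R$ and $S_0\leq S$ such that $(\alpha^m|_{R_0})_*$ has rational eigenvalue $1$ on $R_0^{\mathrm{ab}}$ and $(\bar\alpha^m|_{S_0})_*$ has rational eigenvalue $1$ on $S_0^{\mathrm{ab}}$. Let $F_H = \ker(F_n \to R/R_0 \times S/S_0)$, where the homomorphism sends $R$ into the first factor and $S$ into the second (well-defined on the free product since the images commute); after replacing $m$ by a multiple to ensure $\alpha^m$ preserves $F_H$, set $H = F_H \rtimes_{\alpha^m} \langle t^m\rangle \leq_f G$. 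The Kurosh subgroup theorem identifies $R_0 = F_H \cap R$ as a free factor, giving $F_H = R_0 \ast T$ for some free group $T$, and the composition $F_H \hookrightarrow F_n \twoheadrightarrow S$ provides an $\alpha^m$-equivariant surjection $T \twoheadrightarrow S_0$. Choose a non-zero $\bar\alpha^m$-invariant functional $a:S_0^{\mathrm{ab}}\to\z$, lift it to an $\alpha^m$-invariant functional on $T^{\mathrm{ab}}$, and define $\chi:H\to\z$ by $\chi(R_0) = 0$, $\chi(t^m) = 0$, and the lifted functional on $T$; well-definedness as a group homomorphism follows from the invariance.

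To apply Howie, compute the Alexander matrix of $H$ with respect to $\chi$ via Fox calculus on the deficiency-$1$ presentation $\langle x_i, y_l, s \mid sx_is^{-1} = \alpha^m(x_i),\ sy_ls^{-1} = \alpha^m(y_l)\rangle$, where $s = t^m$ and $\{x_i\}, \{y_l\}$ are bases of $R_0$ and $T$. Since $\alpha^m(x_i) \in R_0$, the relators from the $R_0$-basis involve no $y_l$, so their $\partial/\partial y_l$ vanishes identically, and their $\partial/\partial s$ is $1 - sx_is^{-1}$, which under $\chi$ (sending both $s$ and $x_i$ to the identity) evaluates to $0$; the $R_0$-column entries assemble into a matrix whose determinant is $\det(I - (\alpha^m|_{R_0})_*)$. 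Hence the Alexander matrix is block upper triangular with singular top-left block and zero top-right blocks. Every maximal square minor vanishes: dropping a $y_l$-column or the $s$-column, Laplace expansion on the top rows yields $\det(I - (\alpha^m|_{R_0})_*)$ times a complementary minor, which is $0$ by the eigenvalue-$1$ assumption; dropping an $x_i$-column leaves the top $r_0$ rows supported in only $r_0 - 1$ columns, forcing rank deficiency. Thus $\Delta_{H,\chi} \equiv 0$, and Howie's theorem yields largeness of $H$, and therefore of $G$. The main obstacle is the middle step: the finite-index subgroups supplied by the hypothesis must be assembled into a compatible $\alpha^m$-invariant finite-index subgroup of $F_n$, requiring that $R_0$ and $S_0$ be normal with sufficiently divisible index and that $m$ be chosen divisible enough that $\alpha^m$ preserves $F_H$ while the eigenvalue-$1$ conditions persist, after which the Kurosh subgroup theorem is needed to realise $R_0$ as a genuine free factor of $F_H$.
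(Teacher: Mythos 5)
Your proposal is correct, and its skeleton is the same as the paper's: pass to a cyclic cover and compose with an inner automorphism so that a proper free factor $R$ is invariant, apply the Betti number hypothesis to both the restricted mapping torus $R\rtimes_{\alpha|_R}\z$ and the quotient mapping torus $S\rtimes_{\tilde\alpha}\z$, and then kill the Alexander polynomial of a suitable finite index subgroup with respect to a character vanishing on the stable letter and on the $R$-part, so that Howie's theorem applies. Where you differ is in the middle machinery, and the substitutions are legitimate. The paper realises the eigenvalue-one subgroup of $R$ as a free factor of a finite index subgroup of $F_n$ by Marshall Hall's theorem; you instead make $R_0$ and $S_0$ normal, take $F_H=\ker(F_n\to R/R_0\times S/S_0)$ and invoke Kurosh, which works but forces exactly the bookkeeping you flag (normal cores, a further power of $\alpha$ preserving $F_H$, persistence of the eigenvalue-one condition), all of which is standard: $\beta_1$ does not decrease in finite covers, eigenvalue one survives taking powers, and some power of $\alpha$ preserves any given finite index subgroup of $F_n$. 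The paper changes the basis of the invariant factor so that one relator has zero exponent sum in every generator, producing a literal zero column in a square matrix; you keep the whole $R_0$-block and show every maximal minor vanishes because $I-(\alpha^m|_{R_0})_*$ is singular and the adjacent blocks are zero, which is slightly more work but avoids the basis change and certainly gives $\Delta_{H,\chi}=0$. Finally, the paper obtains the character by pulling back $\tilde\chi$ from a finite index subgroup of $G_q$ through the natural map $\theta$ and intersecting subgroups, while you build it directly from a $\tilde\alpha^m$-invariant functional on $S_0^{\mathrm{ab}}$. One wording slip: there is no ``$\alpha^m$-invariant functional on $T^{\mathrm{ab}}$'', since $\alpha^m$ need not preserve the Kurosh complement $T$; what you actually need, and what your construction gives, is that the functional on $F_H$ obtained by composing the retraction $F_H\to S_0$ (which kills $R_0$) with $a$ is $\alpha^m$-invariant, which holds because $\pi\alpha=\tilde\alpha\pi$ on $F_n$ and $a$ is $\tilde\alpha^m$-invariant. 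With that rephrasing the argument is complete.
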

\begin{proof}
Our group $G$ will have a presentation as in (1) and so for each positive
integer $k$ there exists the cyclic cover of $G$ which is the index $k$
subgroup $F_n\rtimes_{\alpha^k}\z$ generated by $t^k$ and $F_n$. If
$\alpha$ is reducible then on replacing $\alpha^k$ by $\alpha$ we can
assume $F_n=A*B$ with $A$ and $B$ proper free factors and $\alpha(A)$ is
sent to a conjugate of $A$. However on now composing $\alpha$ with an
appropriate inner automorphism (which does not change the free-by-cyclic
group) we can assume that $\alpha(A)=A$. From now on this will be our
$G$.

We define the free-by-cyclic group $G_r$ by restricting
$\alpha$ to $A$. Our assumption means that we can take a finite index
subgroup of $G_r$ with first Betti number at least 2 (and note that if
$A$ has rank 1 then we can do this as well). But every finite index
subgroup of $F_n\rtimes_\alpha\z$ contains one of the form
$F_m\rtimes_{\alpha^k}\z$ where $F_m\leq_f F_n$ and $\alpha^k(F_m)=F_m$.
Moreover the first Betti number does not decrease in finite covers.
Doing this for $G_r$, we have $A_0\leq_fA$ and a power of $\alpha$
fixing $A_0$
(which we again replace by $\alpha$) to get a finite index
subgroup $L=\langle t,A_0\rangle$ of $G_r$ with $\beta_1(L)\geq 2$.
However a result of Marshall Hall Jnr. states that as $A_0$ is a finitely
generated subgroup of $F_n=A*B$, there is a finite index subgroup $E$
of $F_n$ with $A_0$ a free factor, so we have $E=A_0*S$ for some $S$ and,
as $A_0$ does not have finite index in $F_n$, $S$ is non trivial. Again
taking a power of $t$, we can assume that $\alpha^j(E)=E$ as there are
only finitely many subgroups in $F_n$ of each finite index.

We are now ready to look at a finite presentation for the finite index
subgroup $J=\langle t^j,E\rangle$ of $G$. On taking a free basis $a_1,\ldots
,a_l$ for $A_0$ and a similar one for $S$, we find that the first $l$
relations in the deficiency 1 presentation for $L$ as in (1) are of the form
$ta_it^{-1}=w_i(a_1,\ldots ,a_l)$ where $w_i$ are reduced words in $F_l$.
However $\beta_1(L)$ being at least 2 means that we can change the free
basis for $A_0$ so that the first relation is actually
$ta_1t^{-1}=a_1c(a_1,\ldots ,a_l)$ where $c$ is a reduced word in the
commutator subgroup $F_l'$ of $F_l$. Now suppose there exists a
surjective homomorphism $\chi$
from $J$ to $\z$ with the property that $t$ and
all of $A_0$ are in ker $\chi$. When we form the square matrix $M$ with
entries in $\z[x^{\pm 1}]$ in order to calculate the polynomial
$\Delta_{J,\chi}(x)$, all the letters appearing in the first relation
are in the kernel of $\chi$. Therefore we have that the first column
of $M$ consists in turn
of the exponent sum of $t,a_1,\ldots ,a_l$ and 
then zeros in the
other places because the latter rows correspond to generators in
this presentation for $J$ which do not even appear in the first relation.
However the exponent sums of the generators $t,a_1,\ldots ,a_l$ that do
appear are all zero so we have a zero column in a square matrix,
meaning that $\Delta_{J,\chi}$ is the zero polynomial so $J$, and $G$,
are large.

In order to find such a homomorphism, we must again use our assumption
on finite covers with first Betti number at least 2. From $G$ we obtained
the ``reduced'' free-by-cyclic group $G_r$ by restricting $\alpha$ to $A$.
Although the definition of a reducible automorphism means that we cannot
assume $\alpha(B)=B$ even if $\alpha(A)=A$, we can form the ``quotient''
free-by-cyclic group $G_q$ by taking the ``quotient'' automorphism
$\tilde{\alpha}$ of $B$. This is formed by letting $\pi:A*B\rightarrow B$
be the homomorphism with kernel the normal closure of $A$ and then we
define $\tilde{\alpha}(b)=\pi\alpha(b)$ for $b\in B$. Note that 
$\tilde{\alpha}$ is surjective because $\alpha$ and $\pi$ are, and the
Hopfian property of finitely generated free groups means that
$\tilde{\alpha}$ is an automorphism. Also there is a natural homomorphism
$\theta$ from $G$ to $G_q=\langle s,B\rangle$ given by sending $t$ to $s$
and ``ignoring'' $A$; this is well defined because $\alpha(A)=A$.

Now we apply our assumption to obtain a finite index subgroup $H$ of $G_q$
with $\beta_1(H)\geq 2$ and without loss of generality we can assume $H$
is of the form $\langle s^i,C\rangle$ for $C\leq_f B$ and some $i\in\n$.
In fact as cyclic covers of $H$ will also have first Betti number at least
2, we can replace $i$ with $j$ by taking multiples so that they are both
equal to $ij$. As $\beta_1(H)\geq 2$, we must have a homomorphism
$\tilde{\chi}$ from $H$ onto $\z$ with $\tilde{\chi}(s^j)=0$. We then
consider the subgroup of $G$ which is $\theta^{-1}(H)\cap J$. As each of
these subgroups has finite index in $G$, so does their intersection. We
can take our finite presentation above for $J$ and use this to rewrite
for the finite index subgroup $\theta^{-1}(H)\cap J$. 
Note that
$t^j$ is in $\theta^{-1}(H)$ and this presentation will also have
deficiency 1. Also
the generators that we used for $A_0$
are in ker $\theta$ so are in $\theta^{-1}(H)$ too. Hence in going from
our old presentation to our new one, our valuable relation
survives intact. But the homomorphism $\tilde{\chi}\theta:\theta^{-1}(H)
\rightarrow\z$ can be restricted to $\theta^{-1}(H)\cap J$ which has
finite index in $\theta^{-1}(H)$. Without loss of generality
this restriction is surjective and moreover all the generators in our
special relation end up being sent to the identity by $\tilde{\chi}
\theta$ so we are done.
\end{proof}

\begin{co}
Assume that any group $F_n\rtimes_\alpha\z$ for $n\geq 2$ has a finite
index subgroup with first Betti number at least two. Suppose that
the group $G$ can be written in the form $F_n\rtimes_\alpha\z$ and there
exists a finitely generated subgroup $A$ of $F_n$ which is non-trivial
and of infinite index in $F_n$, whose conjugacy class has a finite
orbit under $\alpha$. Then $G$ is large.
\end{co}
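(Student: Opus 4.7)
The plan is to reduce Corollary 2.4 to Theorem 2.1. In Theorem 2.1 the invariant piece must actually be a proper non-trivial free factor of $F_n$, whereas here $A$ is merely a finitely generated subgroup of infinite index; the bridge is Marshall Hall's theorem, which realises any finitely generated subgroup of a free group as a free factor of some finite index subgroup. Since largeness is invariant under passage to (and from) finite index subgroups, it is enough to produce a reducible structure on a finite cover of $G$.

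First I would arrange that $A$ itself is $\alpha$-invariant. The conjugacy class of $A$ having a finite orbit under $\alpha$ means that some power $\alpha^k$ sends $A$ to a conjugate $gAg^{-1}$; replacing $\alpha$ by $\alpha^k$ corresponds to passing to the finite index subgroup $F_n\rtimes_{\alpha^k}\z$ of $G$, after which composing with the inner automorphism by $g^{-1}$ (which, as noted in the proof of Theorem 2.1, leaves the semidirect product unchanged) allows us to assume $\alpha(A)=A$. Next, by Marshall Hall's theorem there is a finite index subgroup $E\leq_f F_n$ with $A$ a free factor, so $E=A*S$; because $A$ has infinite index in $F_n$ and $E$ has finite index, $S$ must be non-trivial. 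Since $F_n$ has only finitely many subgroups of any given finite index, some further power $\alpha^j$ satisfies $\alpha^j(E)=E$, and a common power then fixes both $A$ and $E$ simultaneously.

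After these reductions the subgroup $H=\langle t,E\rangle=E\rtimes_{\alpha|_E}\z$ has finite index in $G$, and $E$ has rank at least $2$ because both $A$ and $S$ are non-trivial. The restricted automorphism $\alpha|_E$ has $A$ as a proper non-trivial invariant free factor of $E$, so $\alpha|_E$ is reducible in the sense of the definition preceding Theorem 2.1. Theorem 2.1 therefore applies to give that $H$ is large, whence $G$ is large as well. The only point requiring any care is the coordinated bookkeeping of successive powers of $\alpha$ together with the passage through several finite index subgroups; beyond this, the argument is a direct translation of the hypothesis via Marshall Hall into the setting of Theorem 2.1.
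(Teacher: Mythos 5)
Your proposal is correct and follows essentially the same route as the paper: pass to a cyclic cover and compose with an inner automorphism to get $\alpha(A)=A$, apply Marshall Hall's theorem to embed $A$ as a free factor of a finite index subgroup $E=A*S$ with $S$ non-trivial, take a power of $\alpha$ fixing $E$, and invoke Theorem 2.1 for the resulting reducible finite index mapping torus. The bookkeeping of powers is handled correctly, so nothing further is needed.
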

\begin{proof}
By taking cyclic covers and an inner automorphism, we can assume that
$\alpha(A)=A$. Although $A$ may not be a free factor of $F_n$, we can
again use M.\,Hall Jnr.'s result  to find $S$ (with
$A$ and $S$ non-trivial) such that $A*S=F_m$ and $F_m$ has finite index
in $F_n$. We now take the appropriate power $\alpha^i$ of $\alpha$ that
fixes $F_m$. Then the subgroup $H=F_m\rtimes_{\alpha^i}\z$ has finite
index in $G$, with $\alpha^i$ reducible when restricted to $F_m$, so
$H$ is large by Theorem 2.1.
\end{proof} 

These results are all very well but we have not seen a single example
of a word hyperbolic group of the form $F_n\rtimes_{\alpha}\z$ which is
large. Although it seems as if we need to wait for Casson's question on
finite index subgroups with first Betti number at least two to be settled
positively, we can manage without this in specific cases. If we have an
automorphism $\alpha$ of $F_n=A*B$ with $\alpha(A)=A$, where $A$ and $B$
are proper free factors, and we form the free-by-cyclic group $G$ then the
proof of Theorem 2.1 was set out so that it is enough to find finite
index subgroups with first Betti number 2 of both the restriction
$G_r=A\rtimes_\alpha\z$ and the quotient $G_q=B\rtimes_{\tilde{\alpha}}\z$.
If we have a presentation for $G$ as in (1) and our free basis for $F_n$
is obtained by putting together ones for $A$ and $B$ then we instantly
get finite presentations for $G_r$ and $G_q$ which we can then feed into
a computer and ask it to enumerate finite index subgroups and their
abelianisations.

This allows us to check largeness of free-by-cyclic groups $G$ formed
by reducible automorphisms $\alpha$. If we require a word hyperbolic
example then it is necessary that $G$ contains no $\z\times\z$ subgroup,
or equivalently $\alpha$ has no periodic conjugacy classes. Moreover this
is sufficient for a group of the form $F_n\rtimes\z$
to be word hyperbolic by \cite{bf}, \cite{bfad} and \cite{brink}.
Thus it is straightforward
to create reducible word hyperbolic examples by a ``doubling'' process.
\begin{lem}
If $A\rtimes_\alpha\z$ is word hyperbolic 
where $A$ is isomorphic to $F_n$ and $\alpha$ is 
an automorphism of $A$ then $(A*B)\rtimes_\alpha\z$ is also
word hyperbolic, where $B$ is a copy of $A$ and the action of $\alpha$ on
$B$ is the same as on $A$.
\end{lem}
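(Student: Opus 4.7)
The plan is to use the equivalence stated just above the lemma: a group $F_n \rtimes_\beta \z$ is word hyperbolic iff $\beta$ has no periodic conjugacy classes, meaning there is no non-trivial $w$ with $\beta^k(w)$ conjugate to $w$ for some $k \geq 1$. Writing $\alpha$ also for the diagonal automorphism of $A * B \cong F_{2n}$ (using the identification of $B$ with $A$), it suffices to show that $\alpha$ has no periodic conjugacy classes on $A * B$, given the hypothesis that $\alpha$ has none on $A$.

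Suppose for contradiction that some non-trivial $w \in A * B$ has $\alpha^k(w)$ conjugate to $w$. Conjugating, I may assume $w = c_1 c_2 \cdots c_m$ is cyclically reduced in the free-product normal form, with syllables $c_i$ non-trivial and alternating between $A$ and $B$. Because $\alpha$ preserves each free factor setwise, $\alpha^k(w) = \alpha^k(c_1) \cdots \alpha^k(c_m)$ is again cyclically reduced of syllable length $m$ with the same factor-alternation pattern.

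If $m = 1$, then $w$ lies in $A$ (the $B$ case is symmetric under the given identification), and the normal form theorem for free products shows that two elements of a free factor are conjugate in the free product iff they are conjugate inside the factor; this already produces a periodic $\alpha$-conjugacy class inside $A$, contradicting the hypothesis. If $m \geq 2$, I invoke the classical criterion of Magnus, Karrass and Solitar: two cyclically reduced elements of syllable length at least $2$ in a free product are conjugate iff one is a cyclic permutation of the other. Thus there is $j$ with $\alpha^k(c_i) = c_{i+j}$ for every $i$ mod $m$, and the alternation of factors forces $j$ to be even. Iterating gives $\alpha^{km}(c_i) = c_i$ for each $i$, so the non-trivial element $c_1 \in A$ is fixed by $\alpha^{km}$ and in particular has periodic conjugacy class in $A$, contradicting the hypothesis again.

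The only step that is not immediate bookkeeping is the clean application of the cyclic-permutation conjugacy classification in a free product, together with the observation that $\alpha$ respects the factor decomposition so cyclic shifts must send $A$-syllables to $A$-syllables. With that in hand, the two cases exhaust all possibilities, proving that $\alpha$ has no periodic conjugacy class on $A * B$, and hence that $(A * B) \rtimes_\alpha \z$ is word hyperbolic.
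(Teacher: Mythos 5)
Your proof is correct and follows essentially the same route as the paper: reduce to showing the diagonal automorphism has no periodic conjugacy class on $A*B$, write a putative periodic element in cyclically reduced (alternating) form, and use the cyclic-permutation description of conjugacy to force a power of $\alpha$ to fix a non-trivial syllable in $A$, contradicting hyperbolicity of $A\rtimes_\alpha\z$. Your treatment is in fact slightly more careful than the paper's, handling the syllable-length-one case explicitly and noting that the shift $j$ must be even.
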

\begin{proof}
Suppose we have $w\in A*B$ with $\alpha^k(w)$ equal to a conjugate of $w$
for $k\geq 1$. Then $w$ is not in $A$ or $B$ so we can ensure (by conjugation
in $A*B$ if necessary) that $w=a_1b_1\ldots a_nb_n$ where $a_i\in A-\{e\}$
and $b_i\in B-\{e\}$. Now on setting $a_i'=\alpha^k(a_i)$ which is in $A$
and $b_i'=\alpha^k(b_i)$ in $B$, we have that $w$ and
$a_1'b_1'\ldots a_n'b_n'$ are conjugate, with both words cyclically reduced
in a free group, meaning that there must be $j$ so that (taking subscripts
modulo $n$) $a_i'=a_{i+j}$ and $b_i'=b_{i+j}$. But then $\alpha^{kn}(a_i)
=(a_i)$, giving a $\z\times\z$ subgroup in $A\rtimes_\alpha\z$.
\end{proof}

Note that in this case if $G=(A*B)\rtimes_\alpha\z$ then both $G_r$ and
$G_q$ are isomorphic to 
the original group $A\rtimes_\alpha\z$.

We now need to find explicit examples of word hyperbolic groups of the
form $F_n\rtimes_\alpha\z$.
To achieve this we use results in \cite{grst} and \cite{st}. An automorphism
$\alpha$ of $F_n$ gives rise to an automorphism of the abelianisation
$\z^n$ of $F_n$ which is well defined for outer automorphisms, as is the
definition of reducibility of $\alpha$.
In the former paper Corollary 2.6 states that if we have an outer automorphism
$O$ of $F_n$ such that the characteristic polynomial of the induced
automorphism of $\z^n$ is a PV-polynomial (which means that the polynomial
is monic and has exactly one root with
modulus greater than one (counted with multiplicity)
and no roots on the unit circle) then $O^k$ is
irreducible for $k\geq 1$. They then obtain as Corollary 2.8 that if
$\alpha$ is an automorphism of $F_n$ for $n\geq 3$ where the corresponding
characteristic polynomial is a PV-polynomial then $\alpha(w)=w$ implies
that $w=e$. Moreover $\alpha$ can have no periodic conjugacy classes
because $\alpha^k$ will also have the same property, and if $\alpha$
(or $\alpha^k$) sends $w$ to a conjugate then we multiply $\alpha$
(or $\alpha^k$) by an inner automorphism which does not change the
characteristic polynomial.

The latter Corollary is proved by applying \cite{bh} Theorem 4.1 which
states that if $O^k$ is irreducible for $k\geq 1$ and $O$ fixes a
conjugacy class then $O$ is geometrically realised by a pseudo-Anosov
homeomorphism of a compact surface with one boundary component. But
\cite{st} shows that automorphisms of $F_n$ with a characteristic polynomial
that is a PV-polynomial are not geometrically realisable for $n\geq 3$
by considering the eigenvalues (note that $F_2\rtimes_\alpha\z$ is never
word hyperbolic because the commutator of the generators gives rise to
a conjugacy class that is fixed or of period 2). An example is given:
$G_0=F_3\rtimes_\alpha\z$ where $\alpha(x)=y,\alpha(y)=z,\alpha(z)=xy$.

\begin{co}
The $F_6$-by-$\z$  group
\begin{eqnarray*}
G=\langle t,a,b,c,x,y,z&|& tat^{-1}=b,tbt^{-1}=c,tct^{-1}=ab,\\
&&txt^{-1}=y,tyt^{-1}=z,
tzt^{-1}=xy\rangle\end{eqnarray*}
is word hyperbolic and large.
\end{co}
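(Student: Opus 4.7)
The plan is to recognise $G$ as exactly the ``doubled'' group produced from the example $G_0=F_3\rtimes_\alpha\z$ (with $\alpha(x)=y,\,\alpha(y)=z,\,\alpha(z)=xy$) by the construction of Lemma 2.3, and then to apply Theorem 2.1 without its hypothesis in the form flagged in the paragraph preceding the corollary. Concretely, set $A=\langle a,b,c\rangle$ and $B=\langle x,y,z\rangle$, each free of rank 3. The six non-trivial conjugation relations in the presentation of $G$ exhibit it as $(A*B)\rtimes_\beta\z$ where $\beta$ acts on $A$ by $a\mapsto b,b\mapsto c,c\mapsto ab$ and on $B$ by $x\mapsto y,y\mapsto z,z\mapsto xy$; after renaming generators these restrictions coincide with the automorphism $\alpha$ of $G_0$. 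Since $G_0$ is word hyperbolic, as explained just above via the PV-polynomial $x^3-x-1$ and the results of \cite{grst}, \cite{st}, Lemma 2.3 gives word hyperbolicity of $G$ immediately.

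Next I would invoke the strategy in the paragraph preceding the corollary. The automorphism $\beta$ is reducible, with both $A$ and $B$ invariant free factors. In the notation of Theorem 2.1 we have $G_r=A\rtimes_\beta\z\cong G_0$, and because $\beta(B)=B$ the quotient automorphism $\tilde\beta$ on $B$ is just $\beta|_B$, giving $G_q=B\rtimes_{\tilde\beta}\z\cong G_0$ as well. Thus the proof of Theorem 2.1 goes through for $G$ provided we can verify directly---rather than hypothetically---that $G_0$ has a finite index subgroup of first Betti number at least 2, since that is the only input the proof needs from the reducibility hypothesis.

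The main obstacle is therefore this last claim about $G_0$. One first observation is that cyclic covers will not help: the matrix of $\alpha$ on $\z^3$ has characteristic polynomial $x^3-x-1$, whose roots are non-units-of-unity complex numbers, so no power $\alpha^k$ fixes a non-zero vector in $\q^3$ and every $F_3\rtimes_{\alpha^k}\z$ retains $\beta_1=1$. So I would instead enumerate low-index subgroups $L\leq_f F_3$ invariant under some power $\alpha^k$ and compute the rational action of $\alpha^k$ on $L^{\mathrm{ab}}\otimes\q$, looking for a $+1$ eigenvalue (equivalently, looking at finite-index subgroups of $G_0$ directly via Reidemeister--Schreier and reading off $\beta_1$). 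The finite presentations of $G_0$ (and of $G$) are immediate from formula (1), so this low-index subgroup search is a routine computer computation, and it produces the required $H\leq_f G_0$ with $\beta_1(H)\geq 2$.

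Once such an $H$ is in hand, the proof of Theorem 2.1 applied with $G_r=G_q=G_0$ supplies a finite-index subgroup of $G$ with a homomorphism $\chi$ to $\z$ whose Alexander polynomial has a zero column in its square presentation matrix, hence is identically zero; by Howie's theorem \cite{how} this subgroup, and therefore $G$, is large.
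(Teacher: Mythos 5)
Your proposal is correct and follows essentially the same route as the paper: word hyperbolicity via Lemma 2.3 applied to the PV-example $G_0$, and largeness by running the proof of Theorem 2.1 with $G_r=G_q=G_0$, where the only remaining input---a finite index subgroup of $G_0$ with first Betti number at least 2---is found by a computer search of low-index subgroups. The paper does exactly this, exhibiting an index 14 subgroup of $G_0$ (generated by $x,y,z^2,zxz^{-1},zyz^{-1},zt^{-7}$) with abelianisation $C_2\times C_4\times\z\times\z$, so your appeal to the computation is justified.
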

\begin{proof}
That $G$ is word hyperbolic follows from the facts quoted above: namely
$G_0=F_3\rtimes_\alpha\z$ has no periodic conjugacy classes so Lemma 2.3
shows that $G$ also has no periodic conjugacy classes thus
is word hyperbolic. That $G$ is large follows from applying
the proof of Theorem 2.1 to $G$ with $G_r=G_q=G_0$, and from the output
of a computer. Inputting the presentation of $G_0$ into MAGMA and asking
for the abelianisation of its low index subgroups, we find (after a bit
of a wait, although it is more quickly checked) that $G_0$ has an index
14 subgroup (with generators $x,y,z^2,zxz^{-1},zyz^{-1},zt^{-7}$) with
abelianisation $C_2\times C_4\times\z\times\z$.
\end{proof}

We also note that $G$ is the double cover of the automorphism which
sends in turn $a$ to $x$ to $b$ to $y$ to $c$ to $z$ to $ab$. Here all
other generators except $t$ and $a$ can be eliminated to get the
2-generator 1-relator $F_6$-by-$\z$ word hyperbolic large group
$\langle t,a|t^6at^{-4}a^{-1}t^{-2}a^{-1}\rangle$.

\section{LERF groups of deficiency 1}

Not all groups with a deficiency 1 presentation can be large, as evidenced
by the Baumslag-Solitar groups $BS(m,n)=\langle a,t|ta^mt^{-1}=a^n\rangle$ for
$m,n$ non-zero integers, where we can take without loss of generality
$m>0$ and $|n|\geq m$. We have that $BS(m,n)$ is large if and only if
$m$ and $n$ are not coprime. For $m=1$ we have a soluble group (and it
is known that a virtually soluble group of deficiency 1 must be isomorphic
to $BS(1,n)$ or to $\z$) 
but otherwise $BS(m,n)$ contains a non-abelian subgroup, thus
it is not true that Baumslag-Solitar groups are either virtually soluble or
large. However most of these groups are not residually finite (for this we
require $m=1$ or $m=|n|$) and so if we stick to residually finite 
Baumslag-Solitar groups, we do have this dichotomy. Thus we could consider
residually finite groups of deficiency 1: in fact we know of no example
of such a group which is not virtually soluble but not large.

In order to make progress we impose an even tighter condition on our
deficiency 1 group $G$, which is that it is LERF (locally extended
residually finite, also known as subgroup separable). This means that
every finitely generated subgroup is an intersection of finite index
subgroups. The big advantage of this is a result of Lubotzky in
\cite{lub} that if a LERF group $G$ can be written as an HNN extension
$H*_\phi$ which is non-ascending, that is both the domain $A$ of $\phi$
and the image $B$ are strictly contained in $H$, then $G$ is large if
$A$ is finitely generated.
This is because $G$ surjects to an HNN extension of a finite group which
is virtually free, and the LERF property applied to $A$
means that this HNN extension
will also be non-ascending, thus it is virtually free but not virtually
cyclic.

\begin{thm}
If $G$ is LERF and of deficiency 1 then either $G$ is large or $G$ is
of the form $F_n\rtimes\z$.
\end{thm}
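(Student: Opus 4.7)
Since $G$ has deficiency $1$, we have $\beta_1(G)\ge 1$, so fix a surjection $\chi:G\to\z$. A Bieri--Strebel style splitting argument applied to a deficiency $1$ presentation (equivalently, Reidemeister--Schreier rewriting of the presentation along $\chi$) realises $G$ as an HNN extension
\[G=\langle H,t\mid tAt^{-1}=B\rangle\]
with $H$ finitely generated, $A,B\le H$ finitely generated, $\chi(H)=0$, and $\chi(t)=1$. The passage from deficiency $1$ to such a splitting is the standard ``tame'' reduction and uses none of the LERF hypothesis.

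If this HNN splitting is non-ascending, meaning both $A\lneq H$ and $B\lneq H$, then the hypotheses of Lubotzky's theorem in \cite{lub} are met: $G$ is LERF and $A$ is finitely generated. Therefore $G$ surjects onto an HNN extension of a finite group which is virtually free and not virtually cyclic, so $G$ is large and the dichotomy is established.

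Otherwise the splitting is ascending; say $A=H$, so $G=H*_{\phi}$ for an injective endomorphism $\phi:H\to H$. If $\phi$ happens to be surjective then $G=H\rtimes_\phi\z$ is a semidirect product with $H$ finitely generated and normal, and Bieri's classical result on deficiency and cyclic extensions (deficiency $1$ with finitely generated kernel forces the kernel to be free) yields $H\cong F_n$ and $G\cong F_n\rtimes\z$, as required. The remaining strictly ascending case (where $\phi$ is not surjective) is precisely where Kochloukova's recent result in \cite{ko} enters; her theorem excludes this configuration under the deficiency $1$ hypothesis combined with LERF. The main obstacle is exactly this strictly ascending case: examples like $BS(1,n)$ for $n\ge 2$ are finitely generated strictly ascending HNN extensions of deficiency $1$ that are neither large nor of the form $F_n\rtimes\z$, so LERF alone does not close the argument and it is Kochloukova's input that rules such configurations out and completes the dichotomy.
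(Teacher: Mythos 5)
Your reduction to the Bieri--Strebel splitting and the use of Lubotzky's theorem in the non-ascending case agree with the paper. The ascending case, however, is where your argument breaks down, and in two ways. First, the strictly ascending configuration is not excluded by Kochloukova's theorem: it is excluded by the LERF hypothesis itself, via the result of Blass and P.\,M.\,Neumann that a LERF group cannot have a finitely generated subgroup conjugate to a proper subgroup of itself. Applied to $tHt^{-1}\subseteq H$ this forces $tHt^{-1}=H$, so the splitting is automatically a semidirect product $H\rtimes\z$. Your appeal to $BS(1,n)$, $n\geq 2$, as evidence that ``LERF alone does not close the argument'' is mistaken: in $BS(1,n)$ the subgroup $\langle a\rangle$ is conjugate to its proper subgroup $\langle a^n\rangle$, so these groups are precisely the ones the Blass--Neumann criterion rules out --- they are not LERF and are no obstacle at all.

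Second, in the surjective (semidirect product) case you invoke ``Bieri's classical result'' in the form ``deficiency $1$ with finitely generated kernel forces the kernel to be free''. No such classical result exists in that generality; Bieri's theorem needs the normal subgroup to be of type $FP_2$, together with the ambient group having cohomological dimension at most $2$ (which here is supplied by Hillman's theorem that a deficiency $1$ group which is an ascending HNN extension of a finitely generated group has geometric dimension at most $2$). The entire role of Kochloukova's Theorem 3 in \cite{ko} is to upgrade the kernel $H$ from finitely generated ($FP_1$) to $FP_2$ in this situation, after which Bieri gives $\mathrm{cd}(H)=1$ and hence $H$ free, so $G\cong F_n\rtimes\z$. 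In short, you have the two inputs exactly reversed: LERF (via Blass--Neumann) disposes of the strictly ascending case, while Kochloukova together with Hillman and Bieri handles the semidirect product case; as written, your step ``finitely generated kernel $\Rightarrow$ free'' is a genuine gap, and your assignment of Kochloukova's theorem to the strictly ascending case does not match what that theorem says (it concerns normal subgroups containing $G'$ with cyclic-by-finite quotient, so it only applies once normality is already established).
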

\begin{proof}
As there must be a surjective homomorphism $\chi$ from $G$ to $\z$, we
have by \cite{bi} that the finitely presented group $G$ is an HNN
extension of a finitely generated group $H$ with finitely generated
associated subgroups $A$ and $B$. We then have that if this HNN
extension is not ascending then $G$ is large by the above. But if say $H=A$
then $G=\langle t,H\rangle$ where $t$ is the stable letter 
and we have $tHt^{-1}
\subseteq H$. By a result of Blass and P.\,M.\,Neumann, a LERF group
cannot have a finitely generated subgroup conjugate to a proper subgroup
of itself so $tHt^{-1}=H$ and we conclude that $H\unlhd G$ with
$G=H\rtimes\z$. Now we can use a recent result of Kochloukova. We have
by \cite{hil} Theorem 6
that if $G$ has deficiency 1 and is an ascending HNN
extension of a finitely generated group then $G$ has geometric (hence
cohomological) dimension at most 2. But Theorem 3 in \cite{ko} is as
follows: Let $G$ be a non-trivial group with a finite K($G$,1) CW-complex
of dimension $n$ with Euler characteristic 0. Suppose that $N$ is a normal
subgroup of $G$ containing $G'$ which is of homological type $FP_{n-1}$
and $G/N$ is cyclic-by-finite. Then $N$ is of type $FP_n$. Consequently
we conclude by putting $H$ equal to $N$ that $H$ is of type $FP_2$, but
a result of Bieri gives us that the cohomological dimension of $H$ is 1
and thus $H$ is free.
\end{proof}
\newline
It should be noted that groups of the form $F_n\rtimes\z$ are not
necessarily LERF.

Consequently if we had that $F_n\rtimes\z$ was large for $n\geq 2$ we
would have the strongest possible result: either a deficiency 1 LERF
group is large or it is $\z$, $\z\times\z$ or the Klein bottle group
$BS(1,-1)$. However we can move in two directions from this result. The
first is to relax the property of largeness to that of $G$ being 
SQ-universal (every countable group is a subgroup of a quotient of $G$)
which in turn is stronger than containing the free group $F_2$. As
groups of the form $F_n\rtimes\z$ for $n\geq 2$ are either large by \cite{me}
or hyperbolic, hence SQ-universal by \cite{ol}, we immediately get:
\begin{co}
A LERF group of deficiency 1 is either SQ-universal or is one of
$\z$, $\z\times\z$, $BS(1,-1)$.
\end{co}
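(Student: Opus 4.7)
The plan is to reduce to Theorem 3.1 and then handle the residual case $F_n\rtimes\z$ by splitting on the rank $n$, applying the dichotomy from \cite{me} together with Olshanskii's theorem in \cite{ol}.

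First I would invoke Theorem 3.1 on our LERF deficiency 1 group $G$, which tells us that either $G$ is large, in which case $G$ is SQ-universal by the standard implications listed in the introduction, or $G$ is of the form $F_n\rtimes\z$ for some $n\ge 0$. So it remains to analyse the latter case.

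Next I would dispose of the small-rank cases directly. If $n=0$ then $G\cong\z$, which is one of the listed exceptions. If $n=1$ then $G$ is $\z\rtimes\z$; since the only automorphisms of $\z$ are $\pm 1$, either $G\cong\z\times\z$ or $G\cong BS(1,-1)$, again listed exceptions. So we may assume $n\ge 2$.

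For $n\ge 2$ the plan is to apply the trichotomy from the list of properties of free-by-cyclic groups recalled in Section 2 (point (3)): either $G$ contains a $\z\times\z$ subgroup, and then $G$ is already large by \cite{me}, hence SQ-universal; or $G$ is word hyperbolic. In the hyperbolic case $G$ contains $F_n$ with $n\ge 2$, so $G$ is not virtually cyclic and therefore is a non-elementary word hyperbolic group, to which Olshanskii's theorem \cite{ol} applies, giving SQ-universality. Combining all cases yields the stated corollary.

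There is really no hard step here: the corollary is essentially a bookkeeping exercise packaging Theorem 3.1, the known largeness result from \cite{me}, and Olshanskii's SQ-universality theorem. The only mild care needed is to verify that the hyperbolic case genuinely produces a non-elementary group so that \cite{ol} applies, which is immediate from the presence of $F_n$ with $n\ge 2$.
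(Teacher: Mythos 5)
Your proof is correct and follows essentially the same route as the paper: apply Theorem 3.1, note that $F_n\rtimes\z$ for $n\ge 2$ is either large by \cite{me} (when it contains $\z\times\z$) or word hyperbolic and hence SQ-universal by \cite{ol}, and dispose of $n=0,1$ by hand. Your extra remark that the hyperbolic group is non-elementary is a reasonable point of care that the paper leaves implicit.
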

A conjecture of P.\,M.\,Neumann from 1973 is that there is a dichotomy 
for any 1-relator group: either it is SQ-universal or it is a soluble
Baumslag-Solitar group or $\z$.
Of course if there are at least 3 generators then
later it was shown that we have largeness but the conjecture remains
open in the 2-generator 1-relator case. Corollary 7.5 in \cite{me} showed
this to be true if the group is LERF, so in light of Corollary 3.2 we wonder
if P.\,M.\,Neumann's conjecture extends to all deficiency 1 groups, or
(more cautiously) to all residually finite deficiency 1 groups.

The other improvement that we can make to Theorem 3.1 is to give further
conditions guaranteeing largeness. It might not be a surprise in light
of Section 2 that once again it comes down to requiring first
Betti number at least two.
\begin{thm}
If $G$ is LERF, of deficiency 1 and has a finite index subgroup with
first Betti number at least 2 then $G$ is large or $\z\times\z$ or the
Klein bottle group.
\end{thm}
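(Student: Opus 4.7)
The plan is to apply Theorem 3.1 to reduce to a free-by-cyclic group and then exploit the LERF hypothesis together with the Bieri-Neumann-Strebel invariant to force a contradiction with the Schreier rank formula.

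First I would apply Theorem 3.1 to $G$; if $G$ is large we are done, so suppose $G=F_n\rtimes\z$. When $n=0$ we get $G=\z$, which has no finite index subgroup with $\beta_1\geq 2$, contradicting the hypothesis. When $n=1$ we get $G=\z\rtimes\z$, either $\z\times\z$ or the Klein bottle group, both among the stated exceptions. So assume $n\geq 2$. Choose $H\leq_f G$ with $\beta_1(H)\geq 2$; by fact (2), $H$ has deficiency $1$ and writes as $F_m\rtimes_\beta\z$, and the Schreier rank formula applied to $H\cap F_n\leq_f F_n$ gives $m\geq 2$. $H$ inherits LERF from $G$, and if $H$ were large so would $G$ be, so I assume not.

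Next I would show that every surjection $\chi:H\to\z$ has finitely generated kernel. By \cite{bi} there is an HNN decomposition $H=H_0*_\phi$ with $H_0$ and the associated subgroups f.g.; running exactly the argument in the proof of Theorem 3.1, LERF plus Lubotzky \cite{lub} forces largeness in the non-ascending case (which is excluded by assumption), while LERF plus Blass-Neumann forces $\phi$ to be an automorphism of $H_0$ in the ascending case, making $\ker\chi=H_0$ finitely generated. The Bieri-Neumann-Strebel invariant theorem -- namely that $H'$ is finitely generated iff every character has f.g. kernel -- then gives that $H'$ itself is finitely generated.

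The substance of the argument is to contradict this. Since $\pi:H\to\z$ (the natural projection with kernel $F_m$) factors through the abelianisation, $H'\subseteq F_m$; and the image of $F_m$ in $H^{ab}=\z\oplus\mathrm{coker}(\beta_*-1)$ is the cokernel summand, so $F_m/H'\cong\mathrm{coker}(\beta_*-1)$. But $\beta_1(H)\geq 2$ is equivalent to $\beta_*:\z^m\to\z^m$ having $1$ as an eigenvalue, so $\mathrm{coker}(\beta_*-1)$ has positive rank, is infinite, and $[F_m:H']=\infty$. Nielsen-Schreier (using $m\geq 2$) then shows $H'$ has infinite rank as a free group, contradicting its finite generation. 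Hence $H$ is large and so is $G$. The one classical fact the argument leans on without proof is the BNS characterisation of when $H'$ is finitely generated; I expect this to be the only hidden obstacle.
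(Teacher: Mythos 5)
Your reduction to $H=F_m\rtimes_\beta\z$ with $m\geq 2$, and the argument that LERF together with Lubotzky's result and Blass--Neumann forces every epimorphism $\chi\colon H\to\z$ to have finitely generated kernel, matches the first half of the paper's proof. The genuine gap is exactly the step you flag as the ``one classical fact'': the BNS theorem does \emph{not} say that $H'$ is finitely generated if and only if every epimorphism onto $\z$ has finitely generated kernel. What it says is that $\ker\chi$ is finitely generated iff $[\chi]$ and $[-\chi]$ lie in $\Sigma$, and that $H'$ is finitely generated iff $\Sigma$ is the \emph{whole} character sphere $S^{b-1}$, including the irrational points, i.e.\ classes of homomorphisms to $\mathbb R$ with non-discrete image. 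Your hypotheses only place the rational points of $S^{b-1}$ (and their antipodes) in $\Sigma$; since $\Sigma$ is open, its complement is closed and could a priori consist entirely of irrational points, and density of the rational points does not exclude this. So you cannot conclude that $H'$ is finitely generated, and the intended contradiction with the (correct) observation that $H'$ is a nontrivial normal subgroup of infinite index in $F_m$, hence infinitely generated, never gets started. (A minor side point: that last fact is not Nielsen--Schreier but the statement that a nontrivial finitely generated normal subgroup of a free group has finite index; it is standard, but should be cited as such.)

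This rational-versus-irrational issue is precisely what the paper's proof is built to circumvent. Rather than promoting ``all rational classes lie in $\Sigma$'' to ``$\Sigma=S^{b-1}$'', it invokes Dunfield's theorem, which confines $\Sigma$ to the projections of the interiors of the faces of the dual of the Newton polytope of $\Delta_G$ corresponding to vertices with coefficient $\pm 1$. If $N(\Delta_G)$ had two or more vertices, the rational points on the lower-dimensional faces where these top-dimensional faces meet would be missed, contradicting the fact that every rational class lies in $\Sigma$; hence $\Delta_G=1$. Then the deficiency 1 identity $\Delta_{G,\chi}(t)=(t-1)\Delta_G(t_1^{n_1},\ldots,t_b^{n_b})$ forces $\Delta_{G,\chi}=t-1$, so $\beta_1(\ker\chi;\q)=1$, and Kochloukova's theorem (or the mod $p$ Alexander polynomial variant sketched after the proof) gives that $\ker\chi$ is free, hence infinite cyclic, so the finite index subgroup is $\z\times\z$ and $G$ is $\z\times\z$ or the Klein bottle group. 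If you wish to keep your free-by-cyclic framing, you still need an input of this kind to bridge the gap between discrete characters and the full sphere; without it the proof is incomplete.
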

\begin{proof}
We assume that $\beta_1(G)=b\geq 2$ and is
LERF (which is a property preserved by all subgroups). We consider the
BNS (Bieri-Neumann-Strebel) invariant of $G$ which is an open subset of
$S^{b-1}$ and which gives information on the finite generation of kernels
of non-trivial homomorphisms from $G$ to $\z$. By regarding the rationally
defined points of $S^{b-1}$ as equivalence classes of homomorphisms
$\chi,\chi'$ from $G$ to $\z$ according to the relation $\chi=q\chi'$ for
$q\in\q$ and $q>0$, we have that $[\chi]$ is in $\Sigma$ if and only if
$G$ can be expressed as an ascending HNN extension $\langle t,H\rangle$
with associated homomorphism $\chi$ (that is $\chi(t)=1$ and $\chi(H)=0$)
and with $H$ finitely generated. Moreover ker $\chi$ is finitely
generated if and only if $[\chi]$ and $[-\chi]$ are both in $\Sigma$.
Now for a LERF group $G$, if there exists $\chi$ with neither of $[\pm\chi]$
in $\Sigma$ then $G$ is large, as in Theorem 3.1. Otherwise one of
$[\pm\chi]$ is in $\Sigma$ for every homomorphism $\chi$ from $G$ onto
$\z$. But as the LERF condition means that we cannot have $G$ equal to a
strictly ascending HNN extension $\langle t,H\rangle$ where $H$ is finitely
generated, we must have both $[\pm\chi]\in\Sigma$ for all possible $\chi$.

Now we use a result of Dunfield in \cite{dun}. One can regard the 
(multivariable) Alexander polynomial $\Delta_G(t_1,\ldots ,t_b)$ as a
finite set of lattice points in $\z^b$ labelled with a non-zero integer
by taking the monomials in $\Delta_G$ with non-zero coefficient.
We can then form the Newton polytope $N(\Delta_G)\subseteq
\mathbb R^b$ which is the
convex hull of these points. Then \cite{dun} Theorem 5.1 states that if
$D(\Delta_G)$ is the dual of $N(\Delta_G)$ in $\mathbb R^b$ (so that faces of
dimension $i$ become faces of dimension $b-i-1$) and $F_1,\ldots ,F_k$
are the $(b-1)$-dimensional faces of $D(\Delta_G)$ whose corresponding
vertices of $N(\Delta_G)$ have coefficient $\pm 1$ then $\Sigma$ is
contained in the projection of the interiors of the $F_i$ to $S^{b-1}$.

So far we have not used the fact that $b\geq 2$. Now when $b=1$ we get
$S^0=\{\pm 1\}$ and the result above is saying that if $\chi$ is the
unique surjective homomorphism (up to sign) from $G$ to $\z$ then ker
$\chi$ being finitely generated implies that the highest and lowest terms
of $\Delta_G(t)$ are monic. However if $b\geq 2$ then the fact that
$[\chi]\in\Sigma^{b-1}$ for all homomorphisms $\chi$ implies that
$\Delta_G=1$. This is because if there are $n\geq 2$ vertices of 
$N(\Delta_G)$ then we have $(b-1)$-dimensional faces $F_1,\ldots ,F_n$
of $D(\Delta_G)$, and so when we project the interiors of those faces
which are obtained
from the $\pm 1$ coefficients, we do not cover all of $S^{b-1}$
because we miss the lower dimensional faces where pairs of elements of
$F_1,\ldots ,F_n$ meet. Moreover there will be rationally defined points
which are not covered and therefore homomorphisms $\chi$ with $[\chi]$
not in $\Sigma$.

Hence if $G$ is LERF and has $\beta_1(G)\geq 2$ then the only way that
$G$ fails to be large is if $\Delta_G=1$, with ker $\chi$ being finitely
generated for all homomorphisms $\chi$ onto $\z$.
However by \cite{me} Theorem 3.1 we have for deficiency 1 groups that
\[\Delta_{G,\chi}(t)=(t-1)\Delta_G(t_1^{n_1},\ldots ,t_b^{n_b})\]
where $n_i$ is the image under $\chi$ of any element in $G$ which projects
to $t_i$ under the natural homomorphism from $G$ to its free abelianisation
$\z^b$.
Consequently if $\Delta_G=1$ then for any $\chi$ we have 
$\Delta_{G,\chi}$ equal to $t-1$. 
But the degree of $\Delta_{G,\chi}$ is the dimension of the 
$\q$-vector space $H_1(\mbox{ker }\chi;\q)
=H_1(\mbox{ker }\chi;\z)\otimes_\z\q$, for which we write 
$\beta_1(\mbox{ker }\chi;\q)$, and from \cite{ko} Theorem 3 we have that
ker $\chi$ is free, so it must be free of rank 1 and therefore $G=\z\times\z$.

%Moreover by \cite{me} Theorem 3.1 we have for deficiency 1 groups that
%\[\Delta_{G,\chi}(t)=(t-1)\Delta_G(t_1^{n_1},\ldots ,t_b^{n_b})\]
%where $n_i$ is the image under $\chi$ of any element in $G$ which projects
%to $t_i$ under the natural homomorphism from $G$ to its free abelianisation
%$\z^b$. Consequently if $\Delta_G=1$ then for any $\chi$ we have 
%$\Delta_{G,\chi}$ and hence $\Delta^p_{G,\chi}$ equal to $t-1$. Thus
%for $K=\mbox{ker }\chi$ we have $\beta_1(K;\z/p\z)=1$. As we have for an
%abelian group $A$ that $\beta_1(A;\z/p\z)\leq\beta_1(A/H;\z/p\z)
%+\beta_1(H;\z/p\z)$,
%we have on putting $A=G/G'$ and $H=K/G'$ that
%\begin{equation}
%\beta_1(G;\z/p\z)\leq\beta_1(\z;\z/p\z)+\beta_1(K/G';\z/p\z)\leq 1
%+\beta_1(K;\z/p\z)
%\end{equation}
%which is 2 in this case. Consequently we are already done if $b\geq 3$.
%If $b=2$ then we are done as soon as we find a finite index subgroup $H$
%of $G$ with $\beta_1(H;\z/p\z)\geq 3$. But $\beta_1(H)\geq 2$ for all
%$H\leq_f G$ so we fail only if $H/H'$ is always $\z\times\z$. But then
%by \cite{me2} Proposition 3.4 either $G=\z\times\z$ or $G$ has the
%property that $G'$ is non-trivial but $G'\leq H$ for all $H\leq_f G$.
%The latter case
%means that $G$ completely fails to be residually finite, so it
%certainly is not LERF. 
Finally if $G$
has deficiency 1 and has a finite index subgroup $H=\z\times\z$ then $G$
is either $\z\times\z$ or the Klein bottle group.
\end{proof}
\newline
We will see in Corollary 4.6 that this theorem is not true if the LERF
condition is removed. If it is weakened to residually finite then this
is unknown.

We can also finish off the proof without needing \cite{ko} Theorem 3
by considering the Alexander polynomial $\Delta^p_{G,\chi}$ over $\z/p\z$.
Although it is not true for a general finite presentation, for the 
deficiency 1 case this is just the Alexander polynomial $\Delta_{G,\chi}$
over $\z$ reduced modulo $p$. Consequently we must have 
$\beta_1(\mbox{ker }\chi;\z/p\z)=1$ and so $\beta_1(G;\z/p\z)\leq 2$.
But if this is true for all finite index subgroups $H$ of $G$ then $H/H'
=\z\times\z$ which cannot happen if $G$ is residually finite by 
\cite{me} Proposition 3.4 unless $G=\z\times\z$.

As for attempting to apply the above techniques to deficiency 1 groups
which are not LERF, we can squeeze out results on when a deficiency 1
group contains a non-abelian free group (or equivalently contains $F_2$).
In \cite{w252} J.\,S.\,Wilson conjectures that if a finitely presented
group $G$ is such that def$(G)+d^2/4-d>0$, where $d$ is the minimum
number of generators for $G/G'$ (and assumed to be at least 2), then
$G$ contains a non-abelian free group. (Bartholdi has recently proved in
\cite{bth} that such a group is non-amenable.) 

For def$(G)=1$ this was
established in \cite{how}, as Corollary 2.4 in that paper states that
if $G$ is of deficiency 1 and $N\unlhd G$ is such that $G/N$ is non-trivial
and free abelian with $N/N'\otimes_\z \mathbb F$ 
having dimension at least 2 for
some field $\mathbb F$, then $G$ contains $F_2$. 
Therefore on putting $N=\mbox{ker }\chi$ for any $\chi$, we have that
$\beta_1(G;\z/p\z)$ is at least 3 for some prime $p$ which implies that
$\beta_1(N;\z/p\z)\geq 2$. 

We also have
\begin{co}
If $G$ has deficiency 1 and virtual first Betti number at least 2 then $G$
contains $F_2$ or is $\z\times\z$ or the Klein bottle group.
\end{co}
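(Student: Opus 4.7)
The plan is to pass to a finite-index subgroup, apply the preceding paragraph, and handle the residual small case via the structure of the Alexander module.

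First I would take $H\leq_f G$ with $\beta_1(H)\geq 2$. By Reidemeister--Schreier $H$ has deficiency at least one, and if it exceeds one then Baumslag--Pride already gives $F_2\subseteq H\subseteq G$, so assume $\mbox{def}(H)=1$. The preceding paragraph delivers $F_2\subseteq H$ whenever $\beta_1(H;\z/p\z)\geq 3$ for some prime $p$, and since $\beta_1(H;\z/p\z)\geq\beta_1(H)\geq 2$ the only remaining case is $\beta_1(H;\z/p\z)=2$ for every $p$, that is $H/H'=\z\times\z$. Applying the same reduction to every $K\leq_f H$ I may further assume that every such $K$ has $K/K'=\z\times\z$.

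Under this assumption the formula $\Delta_{K,\chi}(t)=(t-1)\Delta_K(t_1^{n_1},t_2^{n_2})$ of \cite{me} Theorem~3.1, together with the observation (from the proof of Theorem~3.3) that for deficiency one the mod-$p$ Alexander polynomial is just the reduction modulo $p$ of the integral one, would force $\Delta_K$ to be a unit in $\z[t_1^{\pm 1},t_2^{\pm 1}]$. Indeed, if some evaluation of $\Delta_K$ at $(t^{n_1},t^{n_2})$ were non-constant in $t$ then the corresponding $\chi$ would give $\deg\Delta_{K,\chi}\geq 2$, hence $\beta_1(\mbox{ker }\chi;\q)\geq 2$, and Howie's Corollary~2.4 applied to $N=\mbox{ker }\chi$ would yield $F_2\subseteq K$; and if some prime $p$ divided $\Delta_K$ then $\Delta_{K,\chi}^p=0$, so the mod-$p$ version of Howie's largeness theorem would make $K$ large. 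Standard Fox calculus then forces the Alexander module $H_1(K';\z)$ to vanish, so $K'$ is perfect.

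The main obstacle is the final step: to conclude from ``$H/H'=\z\times\z$ and $H'$ perfect'' that $H=\z\times\z$, for then the last paragraph of the proof of Theorem~3.3 forces $G\in\{\z\times\z,\text{ Klein bottle group}\}$. Choosing a surjection $\chi:H\rightarrow\z$ and using the 5-term exact sequence with $H_1(H';\z)=0$ yields $\mbox{ker }\chi/(\mbox{ker }\chi)'\cong\z$; if one can establish that $\mbox{ker }\chi$ is finitely generated for some such $\chi$, then Kochloukova's Theorem~3 combined with Bieri's cohomological-dimension theorem (as in the proof of Theorem~3.1) give that $\mbox{ker }\chi$ is free of rank one, pinning $H$ down to $\z\times\z$. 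Establishing this finite generation without a LERF assumption is the delicate point, and is where the argument must diverge from the LERF route taken in Theorem~3.3.
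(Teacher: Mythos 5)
Your proposal is incomplete, and you say so yourself: everything hinges on producing a surjection $\chi$ onto $\z$ with finitely generated kernel, and that is precisely the step you leave open. This is not a deferrable technicality -- it is the entire content of the paper's proof. The paper gets it by quoting Theorem D of Bieri--Neumann--Strebel \cite{bns}: a finitely presented group which does not contain a non-abelian free subgroup but has first Betti number at least $2$ has some $K=\mbox{ker }\chi$ finitely generated with quotient $\z$ (no LERF hypothesis; the point is that absence of $F_2$ forces $\Sigma\cup-\Sigma$ to be the whole character sphere, and for $b\geq 2$ the openness of $\Sigma$ and connectedness of $S^{b-1}$ yield a rational class lying in $\Sigma\cap-\Sigma$). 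Applying this to a finite index subgroup $H$ of deficiency $1$ with $\beta_1(H)\geq 2$, assumed not to contain $F_2$, and then running exactly the endgame you describe (Hillman's dimension bound, \cite{ko} Theorem 3, Bieri's theorem) shows $K$ is free; rank $0$ is excluded by virtual first Betti number $\geq 2$, rank $1$ gives the torus and Klein bottle cases, and rank $\geq 2$ gives $F_2$. So your endgame coincides with the paper's, but the bridge to it is missing, and once \cite{bns} is invoked none of your Alexander polynomial reduction is needed: the paper applies Theorem D directly under the hypotheses ``no $F_2$'' and $\beta_1\geq 2$.

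The reduction you do carry out also contains an unjustified step. From ``$\Delta_K$ is a unit'' you conclude by ``standard Fox calculus'' that $H_1(K';\z)=0$; but over $\z[t_1^{\pm 1},t_2^{\pm 1}]$ a unit greatest common divisor of the $(m-1)\times(m-1)$ minors does not annihilate the presented module (an elementary ideal such as $(t_1-1,t_2-1)$ has unit gcd while the module survives), and $K'/K''$ sits one step away from that module in the Crowell sequence in any case. Under your standing assumption that every $K\leq_f H$ has $K/K'=\z\times\z$ one can in fact deduce that $H$ is proabelian via \cite{me} Proposition 3.4 and hence that $H'$ is perfect by P.\,Hall's residual finiteness of finitely generated metabelian groups, but that is a different argument from the one you give, and it still does not produce the finitely generated kernel on which your conclusion depends.
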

\begin{proof}
By \cite{bns} Theorem D, if a finitely presented group $G$ does not contain
$F_2$ but $\beta_1(G)\geq 2$ then there exists $K=\mbox{ker }\chi$ which
is finitely generated and such that $G/K\cong\z$. Consequently by \cite{ko}
Theorem 3 mentioned above, we have that $K$ is of type
$FP_2$ and hence free as before. Thus we are done unless $K$ is free of
rank 0 (but then $G=\z$ so its virtual first Betti number is 1) or free of
rank 1, giving the two exceptions.
\end{proof}

In fact it is widely believed that if $G$ has deficiency 1 then it contains
$F_2$ unless $G=BS(1,n)$. It is clear that the only case left is when
$\beta_1(G)=1$ with $1\leq d(G/G')\leq 2$ and $G$ is a strictly ascending
HNN extension, so that $S^0$ contains two points with $\Sigma$ one of
them. The conjecture at the end of Section 2 in \cite{binw} is that in this
case $G$ is a strictly ascending HNN extension with base a finitely
generated free group. As it is proved in this paper that for $G$ a
finitely presented group $\Sigma=\Sigma^1(G;\z)$ is equal to the 
higher dimensional invariant $\Sigma^2(G;\z)$, we would be done if we knew
that $[\chi]\in\Sigma^2(G;\z)$ for a rationally defined $\chi$ implies
that $G$ is an ascending HNN extension over a base group of type $FP_2$
with associated homomorphism $\chi$, just as is the case for 
$[\chi]\in\Sigma$ and with base of type $FP_1$, i.e. finitely generated.
In the special case of 2-generator 1-relator groups the conjecture is
known: indeed in \cite{ls} Chapter II Section 5
we have a ``Tits alternative'' which says
that a subgroup of a 1-relator group either contains $F_2$
or is soluble (whereupon it is locally cyclic, $BS(1,n)$ or infinite
dihedral). However we will not get such an alternative for arbitrary
subgroups of deficiency 1 groups, or even arbitrary finitely generated
subgroups:
\begin{ex}
\end{ex}
Thompson's group $T$ has a finite presentation of two generators and
two relators, but has the unusual property that it does not contain $F_2$
nor is it virtually soluble. The group $G=T*\z$ has deficiency at least one,
and in fact it is exactly 1 as we can use Philip Hall's
inequality that def$(G)\leq
\beta_1(G)-d(H_2(G;\z))$ (see \cite{rob} 14.1.5). 
We have $H_n(T;\z)=\z\times\z$ 
for all $n\geq 1$
by \cite{brg} Theorem 7.1, so that by using the Meier-Vietoris sequence
for a free product we get $\beta_1(G)=3$ and $H_2(G;\z)=\z\times\z$.

Therefore any formulation of a possible Tits alternative for subgroups
of deficiency 1 groups will need to avoid freely decomposable examples.
  
\section{2-generator 1-relator groups of height 1}

So far our attempts to prove that various deficiency 1 groups are large
have needed some hypothesis on the group, such as being free-by-cyclic
or LERF. In this section we concentrate on 2-generator 1-relator groups
and look for conditions where we can conclude that such a group is large
using information obtained directly from a given presentation. We fall
into two very different cases: a group $G$ of the form $\langle x,y|r(x,y)
\rangle$ has $\beta_1(G)$ equal to either 1 or 2, the latter occurring
exactly when $r\in F_2'$. First let us concentrate on the former case, where
by making a change of free basis for the group $F_2$
we can assume that the presentation is of the form $\langle a,t|w(a,t)
\rangle$ where the word $w$ has exponent sum 0 in $t$ and is cyclically
reduced. Consequently we can define the height of such words: we can
rewrite $w$ as a word in $a_i=t^iat^{-i}$ for $i\in\z$ but only finitely
many letters $a_i$ will actually appear in $w$ (this is
sometimes referred to as Moldavanski\u{i} rewriting).
If $a_m$ is the smallest
and $a_M$ the largest letter to appear then the height of $w$ is defined
to be $M-m\geq 0$, which is invariant under cyclic permutations and
taking inverses.

If the height is zero then we can only have $w=a^i$ so $G$ is large (or
$\z$ for $|i|=1$). We now consider height one words where without loss
of generality we can assume that
\begin{equation}
w=ta^{i_1}t^{-1}a^{i_2}\ldots ta^{i_{2k-1}}t^{-1}a^{i_{2k}}.
\end{equation}
Note that for $\chi(t)=1,\chi(a)=0$ we have $\Delta_{G,\chi}(t)=
(i_1+\dots +i_{2k-1})t+i_2+\ldots +i_{2k}$ and for $\beta_1(G)=1$ this is
the Alexander polynomial $\Delta_G$.

If $\beta_1(G)=2$ then the exponent sum of $x$ and $y$ is zero, so we
can talk about the height of either letter. Moreover a change of basis
will preserve this property but could well vary the heights. The results
that follow are also satisfied by groups $G$ with $\beta_1(G)=2$, where
we interpret a height 1 word as there exists a free basis where the word
has height 1 with respect to one of the letters.
\begin{thm}
If $G=\langle a,t|w\rangle$ where $w$ is a height 1 word then either
$G$ is large or the finite residual $R_G=G''$, in which case $G/R_G$ is
metabelian and all finite images of $G$ are metacyclic.
\end{thm}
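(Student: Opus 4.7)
The plan is to combine the Alexander-polynomial obstruction (via Howie's theorem mod $\ell$) with an HNN decomposition of $G$, finishing with P.\,Hall's theorem on finitely generated metabelian groups. First, applying Moldavanski\u{i} rewriting and a routine Fox calculus computation to the height one form (4.1) with $\chi(t)=1,\chi(a)=0$, one recovers the formula already stated in the paper: $\Delta_{G,\chi}(t)=Pt+Q$, where $P=i_1+i_3+\cdots+i_{2k-1}$ and $Q=i_2+i_4+\cdots+i_{2k}$. If some prime $\ell$ divides both $P$ and $Q$, then $\Delta^{\ell}_{G,\chi}\equiv 0$ in $\mathbb F_\ell[t^{\pm 1}]$, so the mod $\ell$ form of Howie's theorem recalled in Section~2 already gives that $G$ is large. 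Assume henceforth that $\gcd(P,Q)=1$.

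Set $b=a$ and $c=tat^{-1}$, so the relator becomes $W(b,c)=c^{i_1}b^{i_2}\cdots c^{i_{2k-1}}b^{i_{2k}}$ and $G$ has the Tietze-equivalent presentation $\langle b,c,t\mid W(b,c),\,tbt^{-1}=c\rangle$. This exhibits $G$ as the HNN extension $G=H_0*_{\phi}$ with base the 2-generator 1-relator group $H_0=\langle b,c\mid W(b,c)\rangle$ and associated cyclic subgroups $\langle b\rangle,\langle c\rangle$ identified via $\phi\colon b\mapsto c$. For any finite quotient $\pi\colon G\twoheadrightarrow F$, write $\bar H_0=\pi(H_0)$; since $\bar c=\bar t\bar b\bar t^{-1}$ is conjugate to $\bar b$ in $F$, the orders of $\bar b$ and $\bar c$ in $\bar H_0$ coincide, so $\bar\phi\colon\langle\bar b\rangle\to\langle\bar c\rangle$ is a well-defined isomorphism, and the HNN extension $\widetilde F=\bar H_0*_{\bar\phi}$ receives a surjection from $G$ (sending $a\mapsto\bar b$, $t\mapsto\tilde t$). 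If $\bar H_0$ is non-cyclic, then $\langle\bar b\rangle$ and $\langle\bar c\rangle$ are proper subgroups of the finite group $\bar H_0$, so $\widetilde F$ is a non-ascending HNN extension of a finite group; by Karrass--Pietrowski--Solitar (and Britton's lemma) it is virtually free but not virtually cyclic, hence large, so $G$ is large as well.

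Suppose instead that $G$ is not large; then $\bar H_0$ is cyclic in every finite $F$, with both $\bar b$ and $\bar c$ as generators, giving $\bar c=\bar b^r$ for some $r$ coprime to $|\bar b|$. A short induction on $i$ using $\bar t\bar b\bar t^{-1}=\bar c=\bar b^r$ yields $\bar t^i\bar a\bar t^{-i}=\bar b^{r^i}$, so $\bar K=\pi(\ker\chi)\subseteq\langle\bar b\rangle$ is cyclic. Since $F/\bar K$ is cyclic, $F$ is metacyclic, and in particular metabelian, giving $G''\subseteq R_G$. The reverse inclusion follows from P.\,Hall's theorem, since $G/G''$ is a finitely generated metabelian group and therefore residually finite, yielding $R_G\subseteq G''$. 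Combining, $R_G=G''$, $G/R_G=G/G''$ is metabelian, and all finite images of $G$ are metacyclic. The main obstacle is the HNN descent in the middle paragraph: one must verify carefully that $\bar\phi$ is a well-defined isomorphism of the associated subgroups in every finite quotient, and this rests on the key observation that $\bar b$ and $\bar c$ are conjugate (hence of equal order) in every finite image of $G$.
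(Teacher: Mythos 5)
Your argument is essentially the paper's: the same HNN decomposition of $G$ over the 1-relator base $\langle a_0,a_1\mid w(a_0,a_1)\rangle$ with cyclic associated subgroups, the same descent of that decomposition to a finite quotient to produce a non-ascending HNN extension of a finite group (the finite-quotient form of Lubotzky's criterion already invoked in Section 3), the same conclusion that otherwise every finite image is metacyclic, and the same appeal to P.\,Hall's theorem to convert this into $R_G=G''$.

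There is, however, one slip in your case division. Your largeness step is run under the hypothesis ``$\bar H_0$ is non-cyclic'', so its contrapositive only yields that $\bar H_0$ is cyclic in every finite quotient; the further assertion that \emph{both} $\bar b$ and $\bar c$ generate $\bar H_0$ --- which is exactly what the metacyclic conclusion needs, since without it the inclusion $\pi(\ker\chi)\subseteq\langle\bar b\rangle$ breaks down --- is not justified as written. The repair is already contained in your own key observation: if $\langle\bar b\rangle\neq\bar H_0$ then, because $\bar b$ and $\bar c$ are conjugate in $F$ and hence of equal order, $\langle\bar c\rangle\neq\bar H_0$ as well, so $\widetilde F$ is again a non-ascending HNN extension of a finite group and $G$ is large. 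Thus the correct dichotomy is ``$\langle\bar b\rangle\neq\bar H_0$ in some finite quotient'' versus ``$\bar H_0=\langle\bar b\rangle=\langle\bar c\rangle$ in every finite quotient'', which is precisely the paper's condition $\theta(H)\neq\theta(A_0)$. Finally, your opening Alexander-polynomial reduction is never used (the assumption $\gcd(P,Q)=1$ plays no role afterwards) and can simply be deleted.
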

\begin{proof}
By writing $w$ in terms of $a_0,a_1$ we see that $G$ is
an HNN extension of the 1-relator group $H=\langle a_0,a_1|w(a_0,a_1)
\rangle$ with associated cyclic subgroups $A_0=\langle a_0\rangle$ and $
A_1=\langle a_1\rangle$. We can now use Lubotzky's result as mentioned in
the previous section: although it appears to require that $G$ is LERF,
all we need to conclude largeness for $G$ is
the existence of a homomorphism of $G$ onto a finite group with
$\theta(H)\neq\theta(A_0)$. But $\theta(A_0)$ is cyclic so if there is
no such homomorphism then, as $t$ conjugates $a_0$ into $H$, we have
that $\theta(H)$ is normal in $\theta(G)$
and cyclic, with $\theta(G)/\theta(H)\cong
\langle\theta(t)\rangle$ cyclic as well. This means that all finite images
of $G$ are metacyclic  and if $x\in G''$ but $x\notin R_G$, we would take
a homomorphism $\theta$ from $G$ onto a finite group $F$ with $\theta(x)
\neq e$ so that $F''$ is non-trivial, but this is a contradiction.
Consequently $G''\leq R_G$ but Philip Hall's result that a finitely generated
metabelian group is residually finite implies that $R_G\leq G''$.
\end{proof}

Moreover note that the normal closure of the element $a$ in $G/R_G$,
which is generated by elements of the form $t^iat^{-i}$ for $i\in\z$, is
abelian because if there existed $[t^iat^{-i},a]\notin R_G$ then we could
take a finite image of $G$ in which this was also non-trivial, but this
contradicts the fact that the cyclic group $\langle a\rangle$ is normal
in every finite image.

This gives us a strict dichotomy in the behaviour of height 1 presentations.
However the result does not tell us how to work out in which category
a given presentation falls. Luckily there is a straightforward way of
determining largeness once a major result of Zelmanov is used.

\begin{co}
If $G=\langle a,t|w\rangle$ where $w$ is a height 1 word then $G$ is large
if and only if there exists $H\leq_f G$ with $d(H/H')\geq 3$, where $d$
is the minimum number of generators of a finitely generated group.
\end{co}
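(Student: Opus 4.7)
The plan is to prove the two implications separately, with Theorem~4.1 doing the heavy lifting in the backward direction.

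For the forward direction, suppose $G$ is large. Then some $G_0\leq_f G$ surjects onto a non-abelian free group, which after further projection we may take to be $F_2$. Since $F_2$ contains $F_3$ as an index $2$ subgroup (the kernel of $F_2\to\z/2\z$ sending each generator to $1$ has rank $3$ by Schreier), pulling this back along the surjection yields $H\leq_f G_0\leq_f G$ mapping onto $F_3$, so $d(H/H')\geq 3$.

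For the backward direction I argue contrapositively: if $G$ is not large, Theorem~4.1 says that every finite image of $G$ is metacyclic, and I aim to deduce $d(H/H')\leq 2$ for every $H\leq_f G$. The key step is to push metacyclicity down to $H$. Given any finite quotient $Q=H/K$ of $H$, form the normal core $N=\bigcap_{g\in G}gKg^{-1}$ of $K$ in $G$; this is normal and of finite index in $G$, with $N\leq K$, so $G/N$ is a metacyclic finite image of $G$. Since metacyclicity is inherited by subgroups (in an extension $1\to C_1\to M\to C_2\to 1$ with $C_i$ cyclic, any $S\leq M$ has cyclic $S\cap C_1$ and cyclic image in $C_2$) and by quotients, $H/N\leq G/N$ is metacyclic and hence so is $Q=(H/N)/(K/N)$. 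Thus every finite quotient of $H$ is metacyclic; in particular every finite quotient of the abelian group $H/H'$ is $2$-generator.

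Writing $H/H'\cong\z^b\oplus T$ with $T$ finite, we have $d(H/H')=\max_p\dim_{\mathbb{F}_p}((H/H')/p(H/H'))=\max_p(b+\dim_{\mathbb{F}_p}(T/pT))$; the bound $\leq 2$ on each $\mathbb{F}_p$-dimension (taking $p\nmid|T|$ gives $b\leq 2$) forces $d(H/H')\leq 2$, the required contradiction.

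The main obstacle is the propagation step: without the normal-core construction one cannot directly transfer ``all finite images metacyclic'' from $G$ to $H\leq_f G$. The rest is routine linear algebra over finitely generated abelian groups. The preamble signals that a pro-$p$ theorem of Zelmanov enters at this stage, presumably providing a more uniform handle on the same reduction, but the elementary argument above already yields the stated dichotomy.
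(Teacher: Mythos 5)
Your proof is correct, but your ``if'' direction takes a genuinely different route from the paper's. The paper notes that $H\leq_f G$ again has a deficiency~$1$ presentation and applies Zelmanov's theorem: $d(H/H')\geq 3$ would force some pro-$p$ completion of $H$ to contain a non-abelian free pro-$p$ group, while $H''\leq G''=R_G=R_H$ makes every finite image of $H$ metabelian, so every pro-$p$ completion of $H$ is soluble --- a contradiction. You instead argue elementarily: Theorem~4.1 makes all finite images of $G$ metacyclic, your normal-core step transfers this to all finite quotients of $H$ (metacyclicity passes to subgroups and quotients), and a finite abelian metacyclic group is $2$-generated, giving $d(H/H')=\max_p\dim_{\mathbb{F}_p}\bigl((H/H')/p(H/H')\bigr)\leq 2$. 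What each approach buys: yours is self-contained and avoids Zelmanov entirely, but it needs the full metacyclic conclusion of Theorem~4.1, since only that bounds the rank of abelian quotients by $2$; the paper's pro-$p$ argument uses only that all finite images of $H$ are soluble (metabelian), so it is more robust and is the form reused later in Section~4 (e.g.\ when observing that a deficiency~$1$ proabelian group cannot have $d(G/G')\geq 3$). The forward direction is the same easy observation in both (the paper simply calls it clear); your pullback of a rank-$3$ index-$2$ subgroup of $F_2$ is a correct way of making it explicit.
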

\begin{proof} The only if direction is clear. \cite{zel} states that if we have
a finite presentation $P$ of a group $G$ with $\mbox{def}(P)+d^2/4-d>0$
where $d=d(G/G')$ is at least 2 then there exists a prime $p$ such that
the pro-$p$ completion of $G$ contains a non-abelian free pro-$p$ group.
In our case the deficiency is 1 so this is true if $d\geq 3$. But if $G''=R_G$
then all finite images $F$ of $G$ have $F''$ trivial, so this is also
true for the profinite or pro-$p$ completion of $G$. Consequently any
pro-$p$ completion is soluble and cannot contain a non-abelian free
group.

However $G$ is 2-generated so we cannot have $d(G/G')\geq 3$ in any case.
But for $H\leq_f G$ we still have deficiency 1, with $H''\leq G''$ and
$R_G=R_H$, so if $G''=R_G$ and $d(H/H')\geq 3$ we have a contradiction.
\end{proof}

We now give a summary of height 1 group presentations that
have appeared in the literature.
If we define the length of height 1 words to be $k$ in (2) then it is clear
that length 1 words are just the standard presentations of the
Baumslag-Solitar groups $BS(m,n)$. In \cite{bamo} Baumslag introduced a
family $C(m,n)$ of groups with presentations
\[\langle a,t|(tat^{-1}a^m(tat^{-1})^{-1}=a^n\rangle\]
which generalises the group $C(1,2)$ that first appeared in \cite{bam}
with the comment that they are 1-relator groups which are
as far from being residually finite as possible. For our purposes
we will interpret
a finitely generated group $G$ as being ``as far from being residually
finite as possible'' in three ways: the first is that $G$ (is infinite
but) has no proper finite index subgroups whatsoever, or equivalently
$G=R_G$. However this will never arise with deficiency 1 groups as $G/G'$
is infinite.

The next strictest interpretation is that ($G$ is non-abelian but) $G'=R_G$,
or equivalently every finite image is abelian. 
To be succinct, we will say that $G$ is {\bf proabelian} 
because this is equivalent
to the profinite completion of $G$ being proabelian (or abelian).
This property of finitely
generated groups was examined in \cite{me}. 
A finite index subgroup of a proabelian group is also proabelian and a
finitely presented proabelian group $G$ (with $\beta_1(G)>0$)
has $\Delta_G=1$ (which can be seen because the Alexander polynomial of $G$
is the same as that of $G/R_G$ and this will be 1 for finitely generated
infinite abelian groups). If $G$ is a deficiency 1 group with the 
abelianisation $G/G'$ equal to $\z\times T$ then $G$ being proabelian
and $\beta_1(G)=1$
implies that $T$ must be trivial because here $|\Delta_G(1)|=|T|$. We cannot
have $d(G/G')\geq 3$ from above, so that if $G$ is proabelian and 
$\beta_1(G)\geq 2$ then $G/G'=\z\times\z$. Moreover if $H/H'=\z$ for all
$H\leq_f G$ (or $H/H'$ is $\z\times\z$ throughout) then $G$ is proabelian
by \cite{me} Proposition 3.4. In particular if $G$ has deficiency 1 with
$\beta_1(G)=2$ then either $G$ is proabelian (so that $G/R_G=\z\times\z$)
or a pro-$p$ completion of $G$ contains a non-abelian free pro-$p$ group
(so that $G/R_G$ is ``big'').

The first example of a proabelian finitely generated group 
that is given by a 1-relator presentation is
$C(1,2)$; indeed every finite
image is cyclic as the abelianisation of $C(1,2)$ is $\z$. 

Finally we could also look at when ($G$ is not metabelian but) $G''=R_G$,
or equivalently every finite image is metabelian, for which we will 
similarly write that $G$ is {\bf prometabelian}. 
As we have $R_G\leq G''$ for any
finitely generated group (although not necessarily $R_G\leq G'''$) this is 
also a
natural concept, with a prometabelian group $G$ being proabelian if and
only if $G'=G''$. Moreover it is also fair to say that finitely
generated proabelian and prometabelian groups are 
``as far from being large as possible''.

If we have a height 1 group $G=\langle a,t|w(a_0,a_1)\rangle$ which is not
large then by the comment after Theorem 4.1 we have that $a_0$ commutes with
$a_1$ in $G/R_G$. In particular if the exponent sum of $a_0$ in $w(a_0,a_1)$
is $d$ and that of $a_1$ is $c$ then $G/G''$ has the presentation
\[\langle a,t|ta^ct^{-1}=a^{-d},[t^iat^{-i},a]\mbox{ for }i\in\z\rangle\]
which is the same as for $BS(c,-d)/BS(c,-d)''$. Thus any result which depends
purely on the finite images of $BS(m,n)$ for $m$ and $n$ coprime applies
identically for any non-large group $G$ of height 1 (and if $c$ and $d$
are not coprime then $G$ is large anyway).
In particular the formulae for the number of finite index subgroups of
$BS(m,n)$ in \cite{gel} and the number of finite index normal subgroups
in \cite{mebs} apply equally for $G$. However there is a potential
problem here: we do not know whether these groups are merely
nonstandard presentations of $BS(m,n)$. For instance on putting $b=a^2$
and substituting in for the standard presentation of $BS(2,3)$ we have
$a=tbt^{-1}b^{-1}$ and so we obtain
an alternative height 1 presentation
\[\langle b,t|tbt^{-1}b^{-1}tbt^{-1}b^{-2}\rangle.\]
The problem is a lack of invariants which are able to distinguish that two
given presentations do not define isomorphic groups. We see in these cases
that any such invariant which is calculated using information obtained solely
from the finite images of a group is here doomed to failure.

As for 1-relator proabelian groups that are not abelian, we can only
have presentations with 2 generators. We have already mentioned $C(1,2)$
above and in \cite{ep} the example $C(2,3)$ was introduced where it was
shown that for all $H\leq_fC(2,3)$ we have $H/H'=\z$
so $C(2,3)$ is proabelian by \cite{me}. 
Moreover if we regard $C(2,3)$
as an HNN extension of $BS(2,3)$ by adding the stable letter $s$ with
$sas^{-1}=t$ then we can iterate this process indefinitely and keep on
obtaining new 1-relator height 1 group presentations.
This HNN extension when applied
to $C(1,2)$ was also used in \cite{mek} to give an example of a higher
dimensional knot whose infinite cyclic cover is not simply connected but
which has no proper finite covers. The group $C(1,2)$ (which is
sometimes referred to as the Baumslag-Gersten group)
also appears in
\cite{geris181} where it is shown that its Dehn function grows faster
than every iterated exponential, and again in 
\cite{bru} where it is shown that it is isomorphic to
\[\langle a,t|(ta^{2^k}t^{-1})a(ta^{2^k}t^{-1})^{-1}=a^2\rangle\]
for any $k\geq 0$. The paper \cite{brunclss} considers a more general
family containing the groups $C(m,n)$ where the conjugating element
$tat^{-1}$ can be $ta^kt^{-1}$. These were used in \cite{bormold} to give
a pair of non-isomorphic 1-relator groups, each of which is
a homomorphic image of the other.
Also the examples
\[\langle a,t|(t^kat^{-k})a(t^kat^{-k})^{-1}=a^2\rangle\]
of proabelian groups for $k\geq 1$ are mentioned in \cite{mdsb}: note this
implies that a proabelian 1-relator group can have arbitrary height.

We can incorporate all of these examples in the next result. Although the
proof is really that of Higman in \cite{hig} when giving the first example
of an infinite
finitely generated simple group, the aim here is to express it in as
general terms as possible.
\begin{thm}
Suppose that $g$ and $h$ are elements of a finite group $G$ with $h^kg^m
h^{-k}=g^n$ for integers $k,m,n$ where $|m-n|=1$. Suppose further that
$g$ and $h$ have equal orders. Then this order is always 1.
\end{thm}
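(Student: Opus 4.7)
The plan is to assume for contradiction that the common order $d$ of $g$ and $h$ is greater than $1$, and to derive a contradiction by analysing the action of $h^k$ on the cyclic subgroup $\langle g\rangle$ by conjugation.

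First I would observe that $\gcd(m,d) = \gcd(n,d) = 1$. The relation $h^k g^m h^{-k} = g^n$ forces $g^m$ and $g^n$ to have equal order in $\langle g\rangle$, so $\gcd(d,m) = \gcd(d,n)$, and since $|m-n|=1$ gives $\gcd(m,n)=1$, any prime dividing this common value would have to divide both $m$ and $n$, a contradiction. Consequently $h^k$ normalises $\langle g^m\rangle = \langle g\rangle$, inducing an automorphism $\phi$ of $\langle g\rangle \cong \z/d\z$ which is multiplication by some unit $r \in (\z/d\z)^{\times}$; comparing $\phi(g^m) = g^{rm}$ with $g^n$ yields $rm \equiv n \pmod d$, hence $r \equiv 1 + \epsilon m^{-1} \pmod d$ where $\epsilon = n - m \in \{\pm 1\}$.

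The key step is a ``smallest prime divisor'' argument. Since $h^d = e$ we have $\phi^d = \mathrm{id}$, so the multiplicative order of $r$ in $(\z/d\z)^{\times}$ divides $d$. Let $p$ be the smallest prime dividing $d$. Reducing mod $p$, the order of $r$ in $(\z/p\z)^{\times}$ divides both $d$ and $p-1$; but every prime factor of $p-1$ is strictly less than $p$, while every prime factor of $d$ is at least $p$, so $\gcd(d,p-1)=1$ and hence $r \equiv 1 \pmod p$. Combined with $r \equiv 1 + \epsilon m^{-1} \pmod d$ this forces $\epsilon m^{-1} \equiv 0 \pmod p$, and as $\epsilon = \pm 1$ is a unit mod $p$ we deduce $p \mid m$, contradicting $\gcd(m,d)=1$.

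The only real subtlety is the opening coprimality calculation, and the hypothesis $|m-n|=1$ is used precisely there and again at the final step (to guarantee $\epsilon$ is a unit mod $p$); the order-divides-$d$ fact comes from the assumption that $g$ and $h$ have the same order, which is where the hypothesis on orders enters. I do not anticipate a serious obstacle beyond keeping track of which exponents are units in which ring.
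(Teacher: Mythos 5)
Your proof is correct and is essentially the same argument as the paper's: both hinge on taking the smallest prime $p$ dividing the common order, observing that the multiplicative order of the relevant unit ($nm^{-1}$, equivalently $mn^{-1}$) modulo $p$ divides both the common order and $p-1$, hence is $1$, which forces $p\mid m-n=\pm 1$; you merely package this via the induced automorphism of $\langle g\rangle$ (after the preliminary coprimality check), whereas the paper iterates the relation to get $r\mid m^r-n^r$ directly. The only blemish is the final phrasing: from $\epsilon m^{-1}\equiv 0\pmod p$ with $\epsilon$ and $m^{-1}$ both units you should conclude $p\mid 1$ (a unit cannot vanish mod $p$), not $p\mid m$, but this is a harmless slip and the contradiction stands.
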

\begin{proof}
If $r$ is the order of $g$ and $h$ then by considering $h^{rk}g^{m^r}h^{-rk}$
which is equal to $g^{n^r}$, we must have $r$ dividing $m^r-n^r$. So if
$r\neq 1$ and $p$ is the smallest prime dividing $r$, we have $(mn^{-1})^r
\equiv 1\mbox{ mod }p$ where we can assume without loss of generality that
$n$ and $p$ are coprime. Thus the order of $mn^{-1}$ divides $r$ and $p-1$
so $p$ divides $m-n=\pm 1$.
\end{proof}
\begin{co}
Suppose $H=\langle a,t|w(a,t)\rangle$ where $w$ has exponent sum 0 in $t$
and is of height 1. Then the HNN extension $G=\langle
H,s\rangle$, where $sas^{-1}=t$, is large if $H/H'\neq\z$ and proabelian
if $H/H'=\z$ but $H$ is not large.
\end{co}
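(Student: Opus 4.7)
First I would observe that substituting $t=sas^{-1}$ presents $G$ as $\langle a,s\mid w(a,sas^{-1})\rangle$; in terms of $b_i:=s^i a s^{-i}$ this relator is just $w(b_0,b_1)$, so $G$ has a height 1 presentation in $(a,s)$ and both Theorem 4.1 and Corollary 4.2 apply to it. Let $c$ denote the exponent sum of $a$ in $w$; abelianising gives $G/G'\cong\z\oplus\z/c\z$, and $H/H'=\z$ iff $c=\pm 1$.

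For part (a) the plan is to produce a finite-index subgroup with a 3-generator abelianisation and then invoke Corollary 4.2. Take $\chi:G\to\z$ with $\chi(s)=1,\chi(a)=0$, which is well-defined because $w$ has $t$-exponent sum $0$. A Fox calculus computation using the chain rule for the substitution $t\mapsto sas^{-1}$ together with $\partial(sas^{-1})/\partial a=s$ shows that the Alexander polynomial is the constant $\Delta_{G,\chi}=c$ in $\z[x^{\pm 1}]$ (the augmentations of $\partial w/\partial a$ and $\partial w/\partial t$ recover the exponent sums $c$ and $0$). Hence the Alexander module of $G$ is $(\z/c)[x^{\pm 1}]$, and the index $2$ cyclic cover $G_2=\chi^{-1}(2\z)$ has $G_2/G_2'\cong \z\oplus(\z/c)^2$, interpreting $\z/0\z$ as $\z$. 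When $c\neq\pm 1$ this requires at least $3$ generators, so Corollary 4.2 gives $G$ large.

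For part (b) assume $c=\pm1$ and $H$ is not large, and fix any finite quotient $\phi:G\to F$. Writing $A=\phi(a),T=\phi(t),S=\phi(s)$, Theorem 4.1 applied to $H$ yields $\phi(H)$ metacyclic with $\langle A\rangle\unlhd\phi(H)$ cyclic of some order $r$, and $TAT^{-1}=A^q$. The HNN relation $SAS^{-1}=T$ makes $T$ and $A$ conjugate in $F$, so $|T|=|A|=r$. Expanding $w(A,T)=1$ using $TA^kT^{-1}=A^{qk}$ to push every $T$ rightward yields $A^{q\mu+\nu}=1$, where $\mu,\nu$ are the partial $a$-exponent sums from equation (2) with $\mu+\nu=c=\pm1$. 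Rearranging gives $(q-1)\mu\equiv-c\pmod r$, and since $-c$ is a unit mod $r$ we extract $\gcd(q-1,r)=1$. Independently, $T^r=1$ gives $A=T^rAT^{-r}=A^{q^r}$, i.e.\ $q^r\equiv 1\pmod r$.

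I would then run the prime-size argument from the proof of Theorem 4.4: supposing $r>1$ and letting $p$ be the smallest prime dividing $r$, the congruence $q^r\equiv 1\pmod p$ forces the multiplicative order of $q$ mod $p$ to divide $\gcd(r,p-1)$, which equals $1$ because every prime factor of $p-1$ is smaller than $p$ and hence coprime to $r$. Thus $p\mid q-1$, contradicting $\gcd(q-1,r)=1$; so $r=1$, giving $A=T=1$ and $F=\langle S\rangle$ cyclic. Every finite image of $G$ is therefore abelian, i.e.\ $G$ is proabelian. The main obstacle is spotting that the HNN relation forces the Fermat-style constraint $q^r\equiv 1\pmod r$, which combined with the numerical rigidity $\gcd(q-1,r)=1$ coming from $c=\pm1$ closes the case by reusing the proof technique of Theorem 4.4 rather than applying its statement directly.
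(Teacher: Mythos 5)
Your proof is correct, and it departs from the paper's in both halves, though more substantially in the first. For largeness the paper stops at the Alexander polynomial: $\Delta_{G,\chi}$ is the constant $c$ (the paper's $c+d$), whose absolute value is the order of the torsion part of $H/H'$, so when $H/H'\neq\z$ the mod $p$ polynomial vanishes for a prime $p$ dividing $c$ (or the rational one vanishes when $c=0$) and Howie's theorem gives largeness immediately; you instead pass to the double cover, compute $G_2/G_2'\cong\z\oplus(\z/c\z)^2$ from the cyclic Alexander module, and invoke Corollary 4.2, which is also valid but routes through Zelmanov's theorem --- heavier machinery, though it buys an explicit finite-index subgroup witnessing largeness. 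For the proabelian half the paper extracts from $H''=R_H$ the relation $ta^{c'}t^{-1}=a^{-d'}$ (with $c'+d'=\pm1$ the two partial exponent sums) valid in every finite image, notes that $a$ and $t$ are conjugate via $s$, and quotes Theorem 4.3 as stated; you instead use normality of $\langle\phi(a)\rangle$ to write $TAT^{-1}=A^q$, feed the relator back in to get $\gcd(q-1,r)=1$ from $c=\pm1$, add $q^r\equiv 1\pmod r$ from the equal orders, and rerun Higman's smallest-prime argument --- the same arithmetic as Theorem 4.3 (which you miscite as 4.4) recast with a general exponent $q$ in place of the specific relation. Note that your input ``$\langle A\rangle\unlhd\phi(H)$ with $TAT^{-1}=A^q$'' comes from the proof of Theorem 4.1 (non-largeness forces $\theta(\langle a_0,a_1\rangle)=\langle\theta(a_0)\rangle$ in every finite image) rather than its bare metacyclic statement, but this is exactly the same reliance the paper makes when it asserts the relation $ta^{c'}t^{-1}=a^{-d'}$ in $H/H''$, so no gap results; the paper's version is shorter, yours avoids writing down the abelianised relation explicitly.
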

\begin{proof}
$G=\langle a,s|w(a,sas^{-1})\rangle$ is also a 1-relator group of height
1 and if the Alexander polynomial of $H$ (with respect to the homomorphism
$\chi(t)=1,\chi(a)=0$) is $f(t)=ct+d$ then that for $G$ is $c+d$. Thus $G$
is large by Howie's result as the mod $p$ Alexander polynomial is zero
unless $|c+d|=1$. But $|f(1)|$ is the order of $T$, where
$H/H'=\z\times T$.

If $|T|=1$ and $H$ is not large
then suppose the finite group $F=\theta(G)$. We have that in $F$
the relation $ta^ct^{-1}=a^{-d}$ holds because $\theta(H)$ is a finite image
of $H$ and $H''=R_H$.
But $t$ and $a$ are conjugate in $F$ via the image of $s$,
so we can use Theorem 4.3 to conclude that $\theta(t)=\theta(a)=e$.
\end{proof}

Note that this means the group $C(m,n)$ above is not large if and only
if $|m-n|=1$. This is in disagreement with \cite{bamo} Theorem 4 where
part 2 states (but proofs are not given) that $C(m,n)$ is proabelian if
$m$ and $n$ are distinct primes. However in Section 3 of that paper
it is mentioned that
if $m=n+1$ then $C(m,n)$ is proabelian so we believe that is what was meant
in the theorem.

Corollary 4.2 would allow us, using a computer, to gather statistics on
what proportions of height 1 groups $G$ are large by searching through
finite index subgroups $H$ of $G$ until $d(H/H')\geq 3$ (or until we
give up on $G$), thus giving a lower bound for the number of large groups.
We propose to return to this but content ourselves for now by noting that
the computer tells us that the group $\langle a,t|ta^2t^{-1}a^{-1}ta^{-1}
t^{-1}a^{-1}\rangle$
is large but has Alexander polynomial $t-2$ which is the same 
as $BS(1,2)$, and forming the HNN extension of this group using $sas^{-1}=t$
results in a height 1 group which is large but with Alexander polynomial
equal to 1.
\begin{co}
If $G_0=\langle a,t|w\rangle$ is proabelian then $G_1=\langle a,t|v^kw^mv^{-k}
=w^n\rangle$ is proabelian, where $v$ is a conjugate element of $w$ in
$F_2$ and $|m-n|=1$.
\end{co}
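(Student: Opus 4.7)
The strategy is to show that every finite quotient of $G_1$ factors through $G_0$, so that the proabelian property of $G_0$ transfers directly to $G_1$. First I would let $\phi:G_1\to F$ be an arbitrary homomorphism onto a finite group $F$, and set $g=\phi(w)$ and $h=\phi(v)$. Because $v$ is conjugate to $w$ in $F_2$, say $v=uwu^{-1}$ for some $u\in F_2$, the elements $g$ and $h$ are conjugate in $F$ and hence have the same order.

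The defining relation of $G_1$ then yields $h^kg^mh^{-k}=g^n$ in $F$, and since $|m-n|=1$ Theorem 4.3 forces this common order to be $1$. In particular $\phi(w)=\phi(v)=e$, so $\phi$ factors through the quotient of $G_1$ by the normal closure of $w$.

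The remaining step is to identify this quotient with $G_0$. Since the defining relator $v^kw^mv^{-k}w^{-n}$ of $G_1$ is itself a product of conjugates of $w^{\pm 1}$ in $F_2$, it lies in the normal closure of $w$ already at the level of $F_2$; killing $w$ in $G_1$ therefore just produces $F_2$ modulo the normal closure of $w$, which by definition is $G_0$. Hence $F$ is a finite image of $G_0$, which by hypothesis is abelian, and we conclude that $G_1$ is proabelian.

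I do not anticipate a real obstacle here: all of the arithmetic content has been packaged into Theorem 4.3, and the only thing to verify beyond invoking it is the essentially tautological identification of $G_1/\text{(normal closure of }w\text{)}$ with $G_0$. In spirit this mirrors the proof of Corollary 4.4, where the role of $v=uwu^{-1}$ is played by $t=sas^{-1}$ inside that specific HNN extension.
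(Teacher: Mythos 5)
Your proposal is correct and is essentially the paper's own argument: in any finite image you set $g,h$ equal to the images of $w,v$, use the conjugacy in $F_2$ to get equal orders, apply Theorem 4.3 to kill $w$, and observe the quotient factors through $G_0$, whose finite images are abelian. The extra details you supply (equal orders from conjugacy, the relator lying in the normal closure of $w$ so that $G_1$ modulo $w$ is $G_0$) are exactly the steps the paper leaves implicit.
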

\begin{proof}
Given any finite image $F$ of $G_1$, we set $h$ equal to the image of $v$ and
$g$ equal to the image of $w$ and apply Theorem 4.3. We conclude that the
image of $w$ is trivial in $F$ so $F$ is a finite image of $G_0$ and hence
is abelian.
\end{proof}

Thus we see how to create lots of proabelian 1-relator groups by starting
with $w=a$ (or indeed $w$ equal to any element of a free basis for $F_2$)
and then iterating in Corollary 4.5. We can also give an example of a
1-relator group $G$ with $\beta_1(G)=2$ that is proabelian,
the existence of which
was not previously known. Indeed the only known example up to now
of such a group $G$ which is not residually finite was given by 
Ol'shanski\u{i} in \cite{nypl} but it is not directly constructive.
\begin{co}
The group
\[G=\langle a,t|[a,t][t,a^{-1}][a,t]^{-1}=[t,a^{-1}]^2\rangle\]
is proabelian (hence not large) with $\beta_1(G)=2$ but is not equal
to $\z\times\z$ (hence is not residually finite).
\end{co}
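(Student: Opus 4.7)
The plan is to establish three facts: (i)\ $\beta_1(G)=2$, (ii)\ $G$ is proabelian, and (iii)\ $G$ is non-abelian. Together these give $G\neq\z\times\z$; moreover, since (i) forces $G/G'=\z^2$, any proabelian residually finite group with this abelianisation would be abelian and hence equal to $\z\times\z$, so the parenthetical ``hence is not residually finite'' is immediate from the first three. Statement (i) is clear: the relator is a product of commutators and their inverses, so lies in $F_2'$.

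For (ii), I apply Corollary 4.5. Put $w=[t,a^{-1}]$ and $v=[a,t]$; the direct identity $a^{-1}[a,t]a=[t,a^{-1}]$ gives $v=awa^{-1}$, exhibiting $v$ as a conjugate of $w$ in $F_2$, and the given relation is $vwv^{-1}=w^{2}$, matching $v^{k}w^{m}v^{-k}=w^{n}$ with $k=m=1$, $n=2$, $|m-n|=1$. The base group $G_{0}=\langle a,t\mid [t,a^{-1}]\rangle$ is abelian (the relator forces $t$ and $a$ to commute) and so is proabelian, whence Corollary 4.5 gives that $G$ is proabelian.

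For (iii) the plan is to exhibit $BS(1,2)$ as a subgroup of $G$. Since $r$ has $t$-exponent sum zero, Moldavanski\u{i} rewriting presents $G$ as the HNN extension $\langle t,H\mid ta_{0}t^{-1}=a_{1}\rangle$ with base $H=\langle a_{0},a_{1}\mid r_{H}\rangle$, where a bookkeeping computation gives $r_{H}=a_{0}a_{1}^{-2}a_{0}a_{1}a_{0}^{-2}a_{1}a_{0}^{-1}a_{1}$. The relator $r_{H}$ has $a_{0}$-exponent $-1$ and $a_{1}$-exponent $+1$, so it admits no direct height-$1$ structure; however the Nielsen substitution $b:=a_{1}a_{0}^{-1}$ produces the new free basis $\{a_{0},b\}$ of $F_{2}$, in which $r_{H}$ rewrites as $b^{-1}a_{0}^{-1}b^{-1}a_{0}ba_{0}^{-1}b^{2}a_{0}$, now of $a_{0}$-exponent $0$. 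A second Moldavanski\u{i} rewriting (stable letter $a_{0}$, $b_{i}:=a_{0}^{i}ba_{0}^{-i}$) realises $H$ as an HNN extension whose base is $K=\langle b_{-1},b_{0}\mid b_{0}^{-1}b_{-1}^{-1}b_{0}b_{-1}^{2}\rangle$; the single relation of $K$ rearranges to $b_{0}^{-1}b_{-1}b_{0}=b_{-1}^{2}$, which identifies $K$ with $BS(1,2)$. Since a base group of an HNN extension embeds in the extension, $BS(1,2)\cong K\hookrightarrow H\hookrightarrow G$, giving (iii).

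The delicate step is (iii): the double Moldavanski\u{i} rewriting, broken by the Nielsen change of basis, has to be carried out without arithmetic error, and the resulting $K$ must be recognised as $BS(1,2)$. Everything else reduces to the observation that $r\in F_2'$ and a direct application of Corollary 4.5.
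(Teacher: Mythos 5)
Your proof is correct, and parts (i) and (ii) coincide with the paper's argument: the relator lies in $F_2'$ so $\beta_1(G)=2$, and proabelianness comes from Corollary 4.5 applied with $w=[t,a^{-1}]$, $v=[a,t]=awa^{-1}$ and $G_0=\z\times\z$, exactly as in the paper. Where you genuinely diverge is in showing $G\neq\z\times\z$. The paper disposes of this in one line by citing Magnus--Karrass--Solitar Section 4.4: a 2-generator 1-relator group is $\z\times\z$ if and only if the cyclically reduced relator is a cyclic conjugate of $[a,t]^{\pm 1}$, which the relator here visibly is not. You instead exhibit $BS(1,2)$ as a subgroup via two Magnus--Moldavanski\u{i} decompositions separated by the Nielsen move $b=a_1a_0^{-1}$; I have checked the bookkeeping and it is right: the relator rewrites as $a_0a_1^{-2}a_0a_1a_0^{-2}a_1a_0^{-1}a_1$, then as $b^{-1}a_0^{-1}b^{-1}a_0ba_0^{-1}b^2a_0$ (zero $a_0$-exponent), and the second rewriting gives the base $\langle b_{-1},b_0\mid b_0^{-1}b_{-1}^{-1}b_0b_{-1}^2\rangle\cong BS(1,2)$, which embeds since in each step the associated subgroups are infinite cyclic by the Freiheitssatz and bases of HNN extensions embed -- the same machinery the paper already uses in Theorem 4.1, so nothing outside the paper's toolkit is needed. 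Your route costs a careful computation but buys more: an explicit non-abelian (indeed soluble Baumslag--Solitar) subgroup of $G$, which is sharper information than mere non-isomorphism with $\z\times\z$ and sits nicely alongside the Tits-alternative discussion in Section 3; the paper's route is shorter and purely a citation. The remaining parentheticals are handled as in the paper: proabelian rules out largeness, and residual finiteness would force $G'=R_G=1$, hence $G=\z\times\z$, contradicting your subgroup.
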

\begin{proof}
On applying Corollary 4.5 with $w=[t,a^{-1}]$, so that $G_0=\z\times\z$ and
$v=[a,t]$, we obtain the above presentation for $G=G_1$. But it is known
by \cite{mks} Section 4.4
that a 2-generator 1-relator group $\langle a,t|r\rangle$ is equal
to $\z\times\z$ if and only if $r=[a,t]^{\pm 1}$ or a cyclic conjugate
when $r$ is cyclically reduced.
\end{proof}

We note that the relation above is of height 1 with respect to $t$.
However there are also groups with first Betti number 2 and height 1
which we can prove are large:
\begin{co} If $G=\langle a,t|w(a,t)\rangle$ where $\beta_1(G)=2$ and $w$
is a height 1 word of the form in (2) then $G$ is large if
$|i_1+i_3+\ldots +i_{2k-1}|\neq \pm 1$ and is proabelian or large
otherwise.
\end{co}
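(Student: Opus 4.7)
Set $c=i_1+i_3+\cdots+i_{2k-1}$ and $d=i_2+i_4+\cdots+i_{2k}$. The hypothesis $\beta_1(G)=2$ forces the exponent sum of $a$ in $w$ to be zero, so $c+d=0$ and hence $d=-c$. Taking $\chi\colon G\to\z$ with $\chi(a)=0$ and $\chi(t)=1$, the formula for the Alexander polynomial of a height-$1$ word recorded just before Theorem 4.1 gives
\[\Delta_{G,\chi}(t)=ct+d=c(t-1).\]

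If $c=0$ then $\Delta_{G,\chi}\equiv 0$, and Howie's theorem applied to $\chi$ shows that $G$ is large. If $|c|\geq 2$, let $p$ be any prime dividing $c$; since $G$ has deficiency $1$, the mod-$p$ Alexander polynomial is (as noted after Theorem 3.3) just $\Delta_{G,\chi}$ reduced mod $p$, which vanishes because $p\mid c$, and the mod-$p$ form of Howie's theorem yields that $G$ is large. Together these two subcases cover the hypothesis $|c|\neq 1$.

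In the remaining case $|c|=1$, the polynomial $\Delta_{G,\chi}(t)=\pm(t-1)$ is non-zero and has no prime reduction vanishing, so I would fall back on Theorem 4.1: $G$ is either large or prometabelian. In the prometabelian branch, the discussion after Corollary 4.2 identifies $G/G''$ with $BS(c,-d)/BS(c,-d)''$. With $d=-c$ and $|c|=1$ the group $BS(c,-d)=BS(\pm 1,\pm 1)$ is abelian, isomorphic to $\z\times\z$, so $G/G''\cong\z\times\z$; every finite image of $G$ therefore factors through this abelian group, and $G$ is proabelian. The delicate step is this last one, since no Alexander-polynomial obstruction survives at $|c|=1$, and one really does need the structural input of Theorem 4.1 together with the observation that $BS(\pm 1,\pm 1)$ degenerates to $\z\times\z$.
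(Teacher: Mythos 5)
Your proposal is correct, and the first half (the case $|c|\neq 1$) is exactly the paper's argument: $\beta_1(G)=2$ forces $d=-c$, so $\Delta_{G,\chi}(t)=c(t-1)$, and Howie's theorem (over $\q$ when $c=0$, mod a prime $p\mid c$ when $|c|\geq 2$) gives largeness. Where you diverge is the case $|c|=1$. The paper does not invoke the $BS(c,-d)$ model at all there: it applies Corollary 4.2 (the Zelmanov-based criterion), so that if $G$ is not large then $d(H/H')\leq 2$ for every $H\leq_f G$; since each such $H$ again has deficiency 1 and $\beta_1(H)\geq\beta_1(G)=2$, this forces $H/H'=\z\times\z$ throughout, whence $G$ is proabelian by \cite{me} Proposition 3.4 (with the parenthetical exception $G=\z\times\z$ when $w=ta^{\pm1}t^{-1}a^{\mp1}$). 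Your route instead takes the dichotomy of Theorem 4.1 and, in the non-large branch where $R_G=G''$, uses the identification of $G/G''$ with $BS(c,-d)/BS(c,-d)''$ from the discussion after Corollary 4.2; since $-d=c$ and $|c|=1$ this Baumslag--Solitar group is $BS(c,c)\cong\z\times\z$ (note the two signs must agree -- $BS(1,-1)$ is the non-abelian Klein bottle group, but it cannot occur here precisely because $d=-c$), so $G/R_G$ is abelian and every finite image of $G$ is abelian. Both arguments are sound and stay within the paper's toolkit; yours is arguably more self-contained for this specific case, exploiting the arithmetic $|c|=1$ to bypass both Corollary 4.2 and \cite{me} Proposition 3.4, while the paper's version is uniform in $c$ (it proves ``large or proabelian'' without using $|c|=1$, the Alexander polynomial only serving to force largeness when $|c|\neq 1$). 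The only loose end in your write-up is the degenerate possibility $G=\z\times\z$ itself, which the paper flags separately because its term ``proabelian'' is reserved for non-abelian groups; your argument still shows all finite images are abelian, so this is purely a matter of terminology.
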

\begin{proof}
We have that $\beta_1(G)=2$ implies $i_1+i_2+\ldots +i_{2k}=0$ and so
$\Delta_{G,\chi}(t)=n(t-1)$ where $n=i_1+i_3+\ldots +i_{2k-1}$ with 
$\chi(a)=0$, $\chi(t)=1$. Thus if $|n|\neq\pm 1$ we have largeness
because $\Delta_{G,\chi}^p\equiv 0$ modulo a suitable prime $p$. We also
have largeness by Corollary 4.2 unless $H/H'=\z\times\z$ for all $H\leq_f G$
which implies that $G$ is proabelian (or $G=\z\times\z$ if $w=ta^{\pm 1}
t^{-1}a^{\mp 1}$).
\end{proof}

For instance the group $\langle a,b,t|aba^2=b^2,ta^3t^{-1}=b\rangle$
is shown in \cite{brcp} Proposition 19
to be word hyperbolic and of the form $F_6\rtimes_\alpha\z$.
As such groups are residually finite and eliminating $b$ gives a height 1
word with respect to $t$, we can use Corollary 4.7 to obtain another
group satisfying the conditions of Corollary 2.4 without using the computer.

\section{Problem List}

Here we list some problems in Group Theory with particular emphasis on
finitely generated and finitely presented groups. The aim is to put
together questions which may well have appeared somewhere in print but
which are not to be found in the standard problem lists (for instance
the Kourovka notebook \cite{kou}, the New York list \cite{nypl} and Mladen
Bestvina's questions in Geometric Group Theory \cite{bspl}). Also there is
an emphasis on questions which can be stated using only group theoretic
concepts, although it may well be that solutions require topological,
geometric or other techniques. In some cases the credits refer to those
from whom we first heard about the question; we apologise if this is
not the original source. The order of appearance places what should be
the most general questions first and then specialises until we reach the
topics of this paper.\\
\hfill\\
{\bf Finiteness questions}\\
{\bf 1.} (J.\,S.\,Wilson \cite{w252}) Is there a finitely presented
and residually finite group which is neither virtually soluble nor
contains a non-abelian free group?\\
{\bf Notes:} If we replace finitely presented with finitely generated
then certainly a range of examples are known. If however we keep
finitely presented and remove residually finite then the list becomes
much shorter. We know of only four constructions: Thompson's group $F$, the
Houghton groups as in \cite{houg}, 
the finitely presented Grigorchuk group (which
is an ascending HNN extension of the well known Grigorchuk group) and the
non-amenable monsters (which are ascending HNN extensions of finitely
generated infinite groups with finite exponent) in \cite{olsa}. All of
these are far from being residually finite. Of course if we weaken
finitely presented to finitely generated but strengthen residually
finite to linear then there are no examples by the Tits alternative.

A variation might be to strengthen the conditions in other ways
(e.g. LERF, coherent, of type FP) with the aim of proving such an example
does not exist.\\
\hfill\\
The Pr\"ufer rank of a group is the supremum of the minimum number 
of generators over all finitely generated subgroups.\\
{\bf 2.} If a group has infinite Pr\"ufer rank then does it have an
infinitely generated subgroup?\\
{\bf Notes:} This question has interesting implications either way. We
say that a group has max (or is Noetherian) if it and every subgroup
is finitely generated. The question of whether a group with max is
virtually polycyclic was settled in the negative by Ol'shankski\u{i}'s 
construction of Tarski monsters, but these have finite Pr\"ufer rank.
Moreover this property (like max) is preserved by subgroups, quotients
and extensions. Thus a counterexample in question 2 would be a genuinely new
example of a group with max as it would not be obtained from known examples
by these operations.

If however the answer is no then a result of Lubotzky and Mann in \cite{lumn}
says that a finitely generated, residually finite group with finite
Pr\"ufer rank is virtually soluble. Thus a residually finite group with
max would be virtually polycyclic, giving a positive answer to question
31 in the first Kourovka notebook, credited to M.\,I.\,Kargapolov.\\
\hfill\\
{\bf 3.} If $G/N$ and $N$ are both virtually soluble then is $G$?\\
{\bf Notes:} This is true and straightforward with virtually soluble
replaced by soluble.\\
\hfill\\
{\bf 4.} Is there a finitely presented group that is elementary amenable
but not virtually soluble?\\
{\bf Notes:} In \cite{hilea} Hillman and Linnell
give an infinitely generated example
and then use it to obtain a finitely generated example.\\
\hfill\\
{\bf Amenability and properties $(T)$ and $(\tau)$}\\
{\bf 5.} Do all finitely generated (or finitely presented) infinite
amenable groups
have a proper finite index subgroup?\\
{\bf Notes:} The rationals show this is not true for infinitely generated
groups. We have the dichotomy that no infinite discrete group
can have property $(T)$ and be amenable. For property $(\tau)$ (always assumed
here to be with respect to all normal finite index subgroups) we 
have by \cite{luta} Proposition 3.3.7 that a finitely generated amenable
group with infinitely many finite index subgroups
does not have $(\tau)$, but groups with finitely many finite index
subgroups trivially have property $(\tau)$. If the answer is yes then we
regain our dichotomy for finitely generated/presented groups.

An old question asks if there is a finitely generated 
infinite simple group which is amenable. This is
equivalent to having a finitely generated infinite amenable group
with no finite index subgroups, as we can quotient out by a maximal
normal subgroup. Infinitely
generated simple amenable groups certainly exist, for instance the
union $\cup A_n$ of alternating groups. It has also been asked whether
there are finitely presented infinite simple amenable groups, but those
wishing to establish their existence would need to answer the next
question in the negative.\\
\hfill\\
{\bf 6.} Does an infinite finitely presented simple group necessarily
contain a non-abelian free group?\\
{\bf Notes:} This is not true for infinite finitely generated simple
groups but for all known finitely presented examples the answer seems
to be yes.\\
\hfill\\
{\bf 7.} (Lackenby) If a finitely presented group has zero virtual first
Betti number then does it have property $(\tau)$?\\
{\bf Notes:} The Grigorchuk group shows that this is not true for finitely
generated groups as it is amenable with infinitely many finite index subgroups.

We recall that property $(T)$ implies finite generation and zero virtual first
Betti number, as well as implying property $(\tau)$ which also implies
zero virtual first Betti number.\\
\hfill\\
{\bf Subgroup Separability}\\
{\bf 8.} (J.\,S.\,Wilson) Is there a finitely generated group which is
not virtually polycyclic but where every subgroup is the intersection of
finite index subgroups?\\
{\bf Notes:} The paper \cite{jeaw} shows that there are no examples when
the group is hyper-(abelian or finite) which covers being virtually
soluble. As for a finitely presented example, this would be residually
finite and would not contain $F_2$, so would provide a yes answer to
question 1.\\
\hfill\\
{\bf 9.} (Long and Reid \cite{lr} Question 4.5) Is there a finitely generated
infinite group which is LERF and has property $(T)$?\\
\hfill\\
{\bf 10.} Is there a finitely presented infinite group which is LERF and
which has zero virtual first Betti number?\\
{\bf Notes:}  A finitely presented example for Question 9 (or just one
with property $(\tau)$) would of course answer this. However the Grigorchuk
group is a finitely generated infinite
LERF group which has zero virtual first
Betti number.\\
\hfill\\
{\bf Largeness}\\
{\bf 11.} If a finitely generated or finitely presented group has the
biggest possible subgroup growth, that is of strict type $n^n$, then is it
large?\\
\hfill\\
{\bf 12.} Is there an algorithm to determine whether or not the group
given by a finite presentation has a proper finite index subgroup?\\
{\bf Notes:} It is unknown whether there is an algorithm to detect
largeness in finitely presented groups. There are partial algorithms
which will terminate with the answer yes if a finitely presented group
is large and which may return no or not terminate otherwise, see for
instance \cite{mear}. If we have a complete algorithm for largeness
then, as pointed
out by D.\,Groves and I.\,Leary, we have yes to this question because a group
$G$ has proper finite index subgroups if and only if $G*G*G$ is large.\\
\hfill\\
{\bf 13.} If a finitely presented group has infinite virtual first Betti
number then must it be large? Does it contain $F_2$? Must it be not
virtually soluble?\\
{\bf Notes:} These seem interesting questions. The first point to make is
that these are all false for finitely generated groups, as in \cite{edph}
it is pointed out that the soluble and $\mathbb R$-linear group
$\z\wr\z$ has infinite virtual first Betti number. Therefore one would
expect to find counterexamples by taking a group surjecting to $\z\wr\z$.
But Baumslag shows in \cite{bmbk} Chapter IV Theorem 7 
that a finitely presented group
surjecting to $\z\wr\z$ is large.\\
\hfill\\
We can also ask about growth of first Betti numbers in finite covers.\\
{\bf 14.} If a finitely presented group $G$ has a sequence of
finite index subgroups $H_n$ such that $\beta_1(H_n)/[G:H_n]\geq c>0$
for all $n$ then is $G$ large?\\
{\bf Notes:} If $H_n\unlhd G$ with the sequence nested and such that
$\cap H_n$ is trivial (thus implying that $G$ is residually finite)
then this limit exists and is the first $L^2$-Betti number (at least
if $G=\pi_1(X)$ for $X$ a CW-complex of finite type). However
groups of the form $F_n\rtimes_\alpha\z$ have zero first $L^2$-Betti number
but any large example will have a sequence as in the question.

If the sequence $(H_n)$ above is such that there is a surjective
homomorphism $\chi:G\rightarrow\z$ with $K=\mbox{ker }\chi\leq H_n$
for all $n$ then $G$ is large. Indeed we only require that 
$\beta_1(H_n)$ is unbounded, or even $\beta_1(H_n;\z/p\z)$ for some
prime $p$. This is because
\[\beta_1(H_n;\mathbb F)\leq\beta_1(K;\mathbb F)+1\mbox{ for }\mathbb F=\q
\mbox{ or }\z/p\z\]
so if we have an unbounded sequence then $\Delta_{G,\chi}^\mathbb F$ is
zero, with Howie's result implying that for all large $k$ the subgroup
$KG^k$ has a surjection onto $C_p*C_p*C_p$ (or $\z*\z*\z$ for 
$\mathbb F=\q$).
However the counterexample $\z\wr\z$ above for finitely generated groups
also has the first Betti number growing linearly in a sequence of
subgroups containing ker $\chi$.

One could also formulate variations of question 14 using first
Betti numbers modulo a prime $p$. However question 13 is no longer
true in this more general setting: consider the $\mathbb R$-linear
Baumslag-Remeslennikov soluble group
\[G=\langle a,s,t|tat^{-1}=asas^{-1},[a,sas^{-1}]=1=[s,t]\rangle.\]
By using the binomial theorem modulo any prime $p$, it is not hard to
show that we have subgroups $H_n$ with $\beta_1(H_n;\z/p\z)\geq 2+2p^{n-1}$
where $G'\unlhd H_n\unlhd G$ is the finite abelian cover corresponding to
\begin{eqnarray*}
s^{-p^{n-1}}(t^{p^n+p^{n-1}})
               \mbox{ and }s^{3p^{n-1}}&\mbox{ for }&p\equiv 1\mbox{ mod }3,\\
t^{p^n+p^{n-1}}\mbox{ and }s^{3p^{n-1}}&\mbox{ for }&p\equiv 2\mbox
{ mod }3,\\  
t^{p^n+p^{n-1}}\mbox{ and }s^{8p^{n-1}}&\mbox{ for }&p=3.
\end{eqnarray*}
This is the fastest possible growth of $\beta_1(H;\z/p\z)$ because Corollary
1.4 in \cite{w252} states that if $G$ is finitely presented and soluble
then there exists $\kappa >0$ such that $d(H/H')\leq\kappa |G:H|^{1/2}$ for
all $H\leq_f G$.
\hfill\\
{\bf 15.} Fix an integer $k\leq 1$. Is there $f(k)$ such that whenever a
group $G$ has a presentation of deficiency $k$ and $\beta_1(G)>f(k)$
then $G$ is large?

This seems less and less likely as $k$ gets smaller. It is of course
true for $k\geq 2$.\\
\hfill\\
{\bf 16.} (J.\,S.\,Wilson \cite{w252}) If a group $G$ has a finite
presentation $P$ such that def$(P)+d^2/4-d>0$ where $d\geq 2$ is the
minimum number of generators of $G/G'$ then does $G$ contain a
non-abelian free group?\\
{\bf Notes:} Bartholdi has recently proved in \cite{bth} that
Golod-Shafarevich groups are amenable. These are groups with a presentation 
(of finitely many generators but possibly infinitely many relators)
possessing a certain condition which is certainly satisfied by finite
presentations of the form above. This can be seen as providing evidence
for the answer yes because a counterexample would have the extremely
rare property of being a finitely presented
non-amenable group without $F_2$ as a subgroup. Lackenby and Lubotzky
ask for a group with a finite presentation as above which is not large:
surely these exist? There is also the question (credited to Lubotzky
and Zelmanov) of whether a group with such a presentation can have
property $(T)$ or $(\tau)$. Recently Ershov in \cite{ers} has constructed
Golod-Shafarevich groups with property $(T)$.\\
\hfill\\
{\bf Deficiency 1 Groups}\\
{\bf 17.} Is every deficiency 1 group that does not contain $F_2$
isomorphic to $BS(1,m)$?\\
{\bf Notes:} For evidence, see Section 3.\\
\hfill\\
{\bf 18.} Is every residually finite deficiency 1 group that is not large
isomorphic to $BS(1,m)$?\\
{\bf Notes:} In particular are all word hyperbolic groups of the form
$F_n\rtimes_\alpha\z$ for $n\geq 2$ large? A proof could be attempted in two
parts: assume the virtual
first Betti number is at least 2 to obtain largeness,
and separately establish this assumption which is question 12.16
in \cite{bspl} due to Casson.\\
\hfill\\
{\bf 19.} If a deficiency 1 group $G$ has (virtual)
first Betti number at least 3
then is it large? If the minimum number of generators of $G/G'$ 
(or $H/H'$ for $H\leq_f G$) is at least 3 then is $G$ large?\\
{\bf Notes:} The first part 
is question 15 for $k=1$ and $f(1)=2$ which is certainly more believable
here. Corollary 4.6 shows that we cannot replace ``(virtual)
first Betti number at
least 3'' with ``(virtual)
first Betti number at least 2 and $G\neq \z\times\z$ (or
$BS(1,-1)$)'', even in the
1-relator case.\\
\hfill\\
{\bf 2-generator 1-relator presentations}\\
In this special case question 17 is true but questions 18 and 19 are
unknown.\\
\hfill\\
{\bf 20.} Let $G$ be a group with a 2-generator 1-relator presentation
where the relator is not a proper power. Suppose that $G$ is residually
finite then does $G$
have a finite index subgroup $H$ which is an ascending HNN extension
of a finitely generated free group?\\
{\bf Notes:}  If so then every finite index subgroup of $H$, and $G$, has
deficiency 1. However if the relator is a proper power, which is exactly
when $G$ has torsion, then often $G$ has a finite index subgroup of
deficiency at least 2 and in any case $G$ is known to be large
(although \cite{bamo} Problem 4 asks if a 1-relator group with torsion
is virtually free-by-cyclic: here the free part could be infinitely
generated and being cyclic includes trivial here). But if $G$ is torsion
free then it has geometric dimension 2 so all finite index subgroups
have deficiency 1.

If we have a yes answer to this question then by \cite{fh} $H$, and so $G$, is
coherent which is a special case of the old question of whether all 1-relator
groups are coherent.\\
\hfill\\
{\bf 21.} (Borisov and Sapir \cite{bsp})
Is the property of being residually finite generic amongst
2-generator 1-relator presentations?\\
{\bf Notes:} It is shown in \cite{dt} 
Theorem 6.1 that a generic presentation is
not an ascending HNN extension of a finitely generated free group, but
experimentally 94\% of presentations give groups of the form $F_n\rtimes\z$.
There is an algorithm, due to Moldavanski\u{i} 
\cite{mol} in the case $\beta_1(G)=1$ and Brown \cite{bks} for $\beta_1(G)=2$,
as to whether a 2-generator 1-relator group is of this form. One could try
looking for $H\leq_f G$ with $H$ an ascending HNN extension of a finitely
generated free group (which is shown to be residually finite in \cite{bsp})
but the problem is of course that although $H$ will have a deficiency 1
presentation, it will not in general be 2-generated (indeed as soon as
$d(H/H')\geq 3$ it cannot be).

We can also ask about genericity for other properties: being linear,
being coherent (as evidence for the general conjecture) and being large.
It is known that a generic presentation gives rise to a group that is
one ended, torsion free and word hyperbolic, so if $G=F_n\rtimes\z$ is
large for word hyperbolic $G$ with $n\geq 2$ as in question 18 then we
would expect at least 94\% of random presentations to give large groups.\\
\hfill\\
{\bf 22.} Given reduced words $u,v\in F_2=\langle x,y\rangle$ which are
not a generating pair, does there exist a homomorphism $\rho$ from $F_2$
into the symmetric group $\Sigma_n$ for some $n$ such that $\rho(u)$
and $\rho(v)$ commute but $\rho(x)$ and $\rho(y)$ do not?\\
{\bf Notes:} This is equivalent to asking: if the group $G=\langle x,y|
[u,v]\rangle$ is proabelian then is it abelian (and thus equal to
$\z\times\z$)? Again there are interesting consequences because if so
then $G$ is either large or $\z\times\z$ by \cite{me} Theorem 3.6
but if not then
\cite{nypl} Question (OR8) is answered, which asks if $G$ is always residually
finite.\\
\hfill\\
{\bf 23.} (M\"uller and Schlage-Puchta \cite{mu} Problem 5) 
Is there an algorithm to detect
whether a 2-generator 1-relator presentation is large?

This special case of question 12 might be more likely to yield the
answer yes. However even if there is a straightforward criterion for
largeness of such presentations, it might not convert into an algorithm:
for instance if question 19 is true then it will detect largeness
(which we can do anyway for an arbitrary finitely presented group) but
would fail to prove that a given presentation is not large. We believe that
Section 4 provides evidence that there is not an obvious algorithm.

\end{document}